\newcommand{\ind}{\ensuremath{\mathds{1}}}
\newtheorem{thm}{Theorem}[section]
\newtheorem{lem}[thm]{Lemma}
\newtheorem{prop}[thm]{Proposition}
\theoremstyle{definition}
\newcommand{\N}{\ensuremath{\mathbb N}}
\newcommand{\C}{\ensuremath{\mathbb{C}}}
\newcommand{\R}{\ensuremath{\mathbb{R}}}
\newcommand{\E}{\ensuremath{\mathbb{E}}}
\newcommand{\Prob}{\ensuremath{\mathbb{P}}}
\newcommand{\bo}{\ensuremath{\mathrm{O}}}
\newcommand{\so}{\ensuremath{\mathrm{o}}}
\newcommand{\Cov}{\ensuremath{\mathrm{Cov}}}
\newsavebox{\smlmat}
\savebox{\smlmat}{$\left[\begin{smallmatrix}1 & 2 \\ 2 &1\end{smallmatrix} \right]$ }
\title{Refined Asymptotics for the Composition of Cyclic Urns}
\author{Noela S.~M\"uller and Ralph Neininger\\
Institute for Mathematics\\
J.W.~Goethe University\\
60054 Frankfurt a.M.\\
Germany\\ \\
Email: {\tt \{nmueller,neiningr\}@math.uni-frankfurt.de}}
\begin{document}

\maketitle

\begin{abstract}
A cyclic urn is an urn model for balls of types $0,\ldots,m-1$. The urn starts at time zero with an initial configuration. Then, in each time step, first a ball is drawn from the urn uniformly and independently from the past. If its type is $j$, it is then returned to the urn together with a new ball of type $j+1 \mod m$. The case $m=2$ is the well-known Friedman urn. The composition vector, i.e., the vector of the numbers of balls of each type after $n$ steps is, after normalization, known to be asymptotically normal for $2\le m\le 6$. For $m\ge 7$ the normalized composition vector is known not to converge. However, there is an almost sure approximation by a periodic random vector.

In the present paper  the asymptotic fluctuations around this periodic random vector are identified.  We show that these fluctuations are asymptotically normal for all $7\le m\le 12$. For $m\ge 13$ we also find asymptotically normal fluctuations when normalizing in a more refined way. These fluctuations are of maximal dimension $m-1$ only when $6$ does not divide $m$. For $m$ being a multiple of $6$ the fluctuations are supported by a two-dimensional subspace.  \end{abstract}

\noindent
{\textbf{MSC2010:} 60F05, 60F15, 60C05, 60J10.

\noindent
\textbf{Keywords:} P\'olya urn, cyclic urn, cyclic group, periodicities, weak convergence, CLT analogue, probability metric, Zolotarev metric.

\section{Introduction and result}
A cyclic urn is an urn model with a fixed number $m\ge 2$ of possible colors of balls which we call types $0,\ldots,m-1$. We assume that initially there is one ball of type $0$. In each step, we draw a ball from the urn, uniformly from within the balls in the urn and independently of the history of the urn process. If its type is $j\in\{0,\ldots,m-1\}$ it is placed back to the urn together with a new ball of type $j+1 \mod m$. These steps are iterated.

We denote by $R_n=(R_{n,0},\ldots,R_{n,m-1})^t$ the (column) vector of the numbers of balls of each type after $n$ steps when starting with one ball of type $0$. Hence, we have $R_0=e_0$ where $e_j$ denotes the $j$-th unit vector in $\R^m$, indexing the unit vectors by $0,\ldots,m-1$. For fixed $m\ge 2$ we denote the $m$-th elementary root of unity by $\omega:=\exp(\frac{2\pi\mathrm{i}}{m})$. Furthermore, we set
\begin{align}
&\lambda_k:= \Re(\omega^k)=\cos\left(\frac{2\pi k}{m}\right),\qquad \mu_k:= \Im(\omega^k)=\sin\left(\frac{2\pi k}{m}\right),\nonumber\\
&v_k:=\frac{1}{m}\left(1,\omega^{-k},\omega^{-2k},\ldots,
\omega^{-(m-1)k}\right)^t\in\C^m,\quad 0\le k\le m-1. \label{eig_vek}
\end{align}
Note that $v_0= \frac{1}{m}\mathbf{1}:=\frac{1}{m}(1,1,\ldots,1)^t\in \R^m$.

The asymptotic distributional behavior of the sequence $(R_n)_{n\ge
  0}$ has  been identified in Janson \cite{Ja83,Ja04,Ja06}, see also
Pouyanne \cite{Pou05,Pou08} and, for the case $m=2$, Freedman \cite{Free65}. Janson developed a limit theory for the compositions of rather general urn schemes. For the cyclic urns  he showed that the normalized composition vector $R_n$ converges in distribution towards a multivariate normal distribution for $2\le m\le 6$, whereas for $m\ge 7$ there is no convergence by a conventionally standardized version of  the $R_n$ due to subtle periodicities. Further, for $m\ge 7$, there exists  a complex valued random variable $\Xi_1$ (depending on $m$) such that almost surely, as $n\to\infty$, we have
\begin{align}\label{strong}
\frac{R_n- \frac{n+1}{m}\mathbf{1}}{n^{\lambda_1}}
-2\Re\left(n^{i\mu_1}\Xi_1v_1\right) \to 0.
\end{align}

We focus mainly on the periodic case $m\ge 7$.  In
the present paper we study the fluctuations of  $n^{-\lambda_1}(R_n- \frac{n+1}{m}\mathbf{1})$ around the periodic sequence $(2\Re(n^{i\mu_1}\Xi_1v_1))_{n\ge 0}$.  We call the differences in (\ref{strong})  residuals.

To formulate our results we denote by $\stackrel{d}{\longrightarrow}$
convergence in distribution. Further, ${\cal N}(0,M)$ denotes the
centered normal distribution with covariance matrix $M$, where $M$ is
a symmetric positive semi-definite matrix. For $v\in\C^m$ we write
$v^*$ for the conjugate transpose of $v$ and for $z \in \C$, $\bar{z}$
denotes the complex conjugate of $z$.  Furthermore, $6\mid m$ and $6 \nmid m$ are short for $6$  divides (resp.~does not divide) $m$.

We distinguish the cases $6\mid m$ and $6 \nmid m$ as follows.
\begin{thm}\label{thm1}
Let $m \geq 2$ with $6\nmid m$ and set $r:=\lfloor (m-1)/6\rfloor$. Then, there exist complex valued random variables $\Xi_1,\ldots,\Xi_r$ such that, as $n \to \infty$, we have
\begin{equation*}
\frac{1}{\sqrt{n}}\left(R_n- \E[R_n] - \sum_{k=1}^r 2
\Re\left(n^{\omega^k}\Xi_k v_k\right)\right) \stackrel{d}{\longrightarrow}\mathcal{N}\left(0,\Sigma^{(m)}\right).
\end{equation*}
The covariance matrix $\Sigma^{(m)}$ has rank $m-1$ and is given by
\begin{equation*}
\Sigma^{(m)}= \sum_{k=1}^{m-1}\frac{1}{|2\lambda_k-1|} v_k v_k^*.
\end{equation*}
\end{thm}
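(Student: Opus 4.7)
The plan is to use the Athreya--Karlin continuous-time embedding: let $(Z_t)_{t\ge 0}$ be the $m$-type Markov branching process in which each ball of type $j$ independently, at rate $1$, ``splits'' into itself plus one new ball of type $j+1 \bmod m$, started from $Z_0=e_0$. Then $R_n \stackrel{d}{=} Z_{\tau_n}$, where $\tau_n$ is the $n$-th splitting time. The generator is the cyclic shift matrix, sharing the eigenvectors $v_k$ of \eqref{eig_vek} and eigenvalues $\omega^k$. For each $k$ I would study the complex projection $X_t^{(k)} := m v_k^* Z_t$, which satisfies the Doob--Meyer decomposition $X_t^{(k)} = m v_k^* e_0 + \omega^k\int_0^t X_s^{(k)}\,ds + M_t^{(k)}$ for a complex-valued martingale $M^{(k)}$. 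Equivalently, $N_t^{(k)}:=e^{-\omega^k t}X_t^{(k)}$ is itself a martingale whose integrated quadratic variation grows like $e^{(1-2\lambda_k)t}$, by a direct computation using the branching jump-rates together with $|Z_t|\asymp e^t$.

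The sign of $1-2\lambda_k$ governs the rest. For the large indices ($\lambda_k>1/2$, namely $k\in\{1,\dots,r\}$ together with their conjugates $m-k$), the quadratic variation converges, so $N^{(k)}$ is $L^2$-bounded and converges almost surely to a complex limit $\Xi_k$; this is the source of the periodic approximation \eqref{strong}. The residual $e^{\omega^k t}\Xi_k-X_t^{(k)} = e^{\omega^k t}(\Xi_k-N_t^{(k)})$ has $L^2$-norm of order $e^{\lambda_k t}\cdot e^{(\frac{1}{2}-\lambda_k)t}=e^{t/2}$, and a reverse martingale CLT applied to the tail $\Xi_k-N_t^{(k)}=\int_t^\infty dN_s^{(k)}$ should yield $e^{-t/2}(e^{\omega^k t}\Xi_k-X_t^{(k)}) \stackrel{d}{\longrightarrow}\mathcal{N}_{\C}(0,1/(2\lambda_k-1))$. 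For the small indices ($\lambda_k<1/2$) the martingale $N^{(k)}$ is unbounded, and the ordinary martingale CLT gives $e^{-t/2}X_t^{(k)} \stackrel{d}{\longrightarrow}\mathcal{N}_{\C}(0,1/(1-2\lambda_k))$. Both regimes thus unify into limiting variance $1/|2\lambda_k-1|$, which is the single ingredient feeding into $\Sigma^{(m)}$.

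Next I would upgrade to joint convergence of all non-principal projections (with the large-index pieces having their periodic part subtracted) via a multivariate continuous-time martingale CLT. The cross quadratic covariations $\langle N^{(k)},N^{(k')}\rangle$ for $k+k'\not\equiv 0\pmod m$ oscillate at non-zero frequency and vanish in Ces\`aro mean. The hypothesis $6\nmid m$ precisely rules out eigenvalues $\omega^k$ with $\lambda_k=1/2$, which would render $1/|2\lambda_k-1|$ ill-defined and demand a different normalisation (the case treated separately in the next theorem). Reassembling the $v_k$-components then yields the covariance $\sum_{k=1}^{m-1}|2\lambda_k-1|^{-1}v_kv_k^*$; the rank is $m-1$ because only the $v_0$-direction (total mass, which equals $n+1$ deterministically) is absent. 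Finally one passes from continuous to discrete time using $\tau_n = \log n + \bo_{L^2}(n^{-1/2})$, which turns $e^{t/2}$ into $\sqrt{n}$ and $e^{\omega^k t}$ into $n^{\omega^k}$ with errors of order $\so(\sqrt{n})$ absorbed into the limit.

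The main difficulty is the refined CLT for the large-eigenvalue residual $\Xi_k-N_t^{(k)}$, jointly across the large indices and coupled with the small-eigenvalue martingales. This is subtler than a classical martingale CLT because one analyses the tail of a convergent martingale rather than an accumulating noise; the Gaussian fluctuations must be harvested from future jumps of the branching process that simultaneously drive the small-eigenvalue projections and the large-eigenvalue residuals, and the joint limit must be shown not to carry any cross-covariance between $v_k$ and $v_{k'}$ for $k'\neq m-k$. Given the Zolotarev-metric keyword of the paper, I anticipate that the cleanest route is to recast the above as a recursive distributional equation for the normalised residual (either by conditioning on the first split of $Z$ or directly on the discrete-urn dynamics) and to prove contraction in $\zeta_s$ for some $s>2$, which handles the complex-Gaussian joint limit and the nontrivial coupling between eigendirections within a single uniform framework.
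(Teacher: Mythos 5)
Your proposal takes a genuinely different primary route from the paper. You work through the Athreya--Karlin continuous-time embedding and aim at martingale central limit theorems, whereas the paper embeds the urn into a \emph{random binary search tree}, uses the resulting decomposition into left and right subtrees to obtain an almost sure distributional recurrence $R_n^{[0]} = R_{I_n}^{[0],(0)} + {\cal A}^t R_{J_n}^{[0],(1)}$ (see~(\ref{basic_rec})), passes this to the normalized residual vector $Z_n$ in~(\ref{rec_zn}), and then runs the contraction method in the Zolotarev metric $\zeta_3$ (Sections~\ref{zolo_sec}--\ref{sec:34}). The authors explicitly acknowledge that a martingale route is being explored in parallel work; your strategy is therefore a legitimate alternative rather than an error. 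The trade-offs are instructive: the continuous-time embedding diagonalizes the dynamics and makes the role of $\Re(\omega^k)$ transparent, and your covariance bookkeeping (the $1/|2\lambda_k-1|$ prefactor, the scaling $e^{t/2}\leftrightarrow\sqrt n$) is correct; the contraction approach instead sidesteps any need for a CLT for the \emph{tail} of a convergent martingale and handles all eigenspaces and cross-covariances in one shot through the recursive fixed-point equation for $\mathcal N(0,M_m)$.

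That said, there is a genuine gap which you yourself flag in your last paragraph: there is no off-the-shelf ``reverse martingale CLT'' for the fluctuations of $\Xi_k - N_t^{(k)} = \int_t^\infty dN_s^{(k)}$ around its almost-sure limit $0$, jointly with the forward CLT for the small-index martingales $N^{(k)}$ with $\lambda_k<\frac12$ and with the mutual decorrelation of distinct eigenspaces. A classical martingale CLT applies to the accumulation $N_t^{(k)}-N_0^{(k)}$ as $t\to\infty$, not to the remaining tail after an $L^2$-convergent martingale has already converged; rescaling that tail by $e^{(\lambda_k-\frac12)t}$ and proving asymptotic Gaussianity requires either a time-reversal/backward-martingale argument or a self-similarity identity, which in effect recreates the fixed-point structure the paper exploits. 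Moreover the Athreya--Karlin clock $\tau_n=\log n+\bo_{L^2}(n^{-1/2})$ must be shown to interact harmlessly with the oscillating factor $e^{i\mu_k t}$ --- a step the paper avoids by working entirely in discrete time. Your closing observation that the cleanest execution is probably a contraction argument in $\zeta_s$ with $s>2$ is precisely what the paper does: it builds the hybrid random vectors $Q_n$ and proves $\zeta_3(N_n,\mathcal N)\to 0$ via Lemmas~\ref{Acoeff}--\ref{bounded}. So the sketch is conceptually sound but would, in practice, need to be completed along the lines of the paper (or a substantial new backward-martingale CLT would have to be proved) to close the central step.
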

When $6\mid m$ then the normalization requires an additional $\sqrt{\log n}$ factor and the rank of the covariance matrix is reduced to $2$:
\begin{thm}\label{thm2}
Let $m \geq 2$ with $6 \mid m$ and set $r:=\lfloor (m-1)/6\rfloor$. Then, there exist complex valued random variables $\Xi_1,\ldots,\Xi_r$  such that, as $n \to \infty$, we have
\begin{equation*}
\frac{1}{\sqrt{n\log n }}\left(R_n- \E[R_n] - \sum_{k=1}^r 2
\Re\left(n^{\omega^k}\Xi_k v_k\right) \right) \stackrel{d}{\longrightarrow}\mathcal{N}\left(0,\Sigma^{(m)}\right).
\end{equation*}
The covariance matrix $\Sigma^{(m)}$ has rank $2$ and is given by
\begin{equation*}
\Sigma^{(m)}= v_{m/6}v_{m/6}^*+ v_{5m/6}v_{5m/6}^*.
\end{equation*}
 \end{thm}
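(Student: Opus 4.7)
The plan is to decompose $R_n - \E[R_n]$ along the eigenbasis $(v_k)_{k=0,\dots,m-1}$ of the (transpose of the) replacement matrix, where $v_k$ is an eigenvector associated to the eigenvalue $\omega^k$, and to classify each spectral component by the size of $\lambda_k = \Re\,\omega^k$. Three regimes are relevant: large eigenvalues with $\lambda_k > 1/2$, critical eigenvalues with $\lambda_k = 1/2$, and small eigenvalues with $\lambda_k < 1/2$. A direct calculation shows $\lambda_k = 1/2$ iff $\cos(2\pi k/m)=1/2$, hence $k\in\{m/6,\,5m/6\}$, and such $k$ lie in $\{1,\dots,m-1\}$ precisely when $6\mid m$. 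The large eigenvalues are exactly the indices $k\in\{1,\dots,r\}\cup\{m-r,\dots,m-1\}$, so subtracting the periodic correction $\sum_{k=1}^r 2\Re(n^{\omega^k}\Xi_k v_k)$ (which encodes both $v_k$ for $1\le k\le r$ and its conjugate $v_{m-k}$) removes all large components from the residual, leaving only contributions from the critical and small regimes.

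In the critical regime, the classical urn-CLT machinery (martingale analysis as in Janson~\cite{Ja04}, or equivalently the contraction method in a Zolotarev metric $\zeta_s$ suggested by the keywords) yields for the projection onto $v_{m/6}$ a variance of order $n\log n$ rather than $n$: in the recursion for the second moment, the critical value $\lambda_k = 1/2$ transforms the series $\sum_{j=1}^n j^{2\lambda_k - 2}$ into the harmonic series, producing a logarithmic inflation. After normalization by $\sqrt{n\log n}$ this projection converges to a centered complex Gaussian, and the projection onto $v_{5m/6}$ is automatically its complex conjugate since $v_{5m/6} = \overline{v_{m/6}}$ and $R_n$ is real-valued. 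In the small regime $\lambda_k < 1/2$, the projections fluctuate on the order $\sqrt{n}$, which is strictly smaller than $\sqrt{n\log n}$ and therefore washes out under the present normalization. Re-assembling the surviving pieces gives the rank-$2$ covariance $v_{m/6}v_{m/6}^* + v_{5m/6}v_{5m/6}^*$.

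To make this rigorous I would set up the distributional recursion satisfied by $R_n$ by conditioning on the type of the first ball drawn (thereby splitting the urn into two independent sub-urns with random initial conditions), and apply the contraction method to each spectral component in a Zolotarev metric, obtaining joint weak convergence directly. The main obstacles are threefold. First, verifying that the logarithmic variance asymptotic gives the precise constant $1$ in front of $v_{m/6}v_{m/6}^*$: this requires carefully tracking the leading coefficient of $\sum_{j=1}^n 1/j \sim \log n$ as it propagates through the recursion, in order to rule out any additional prefactor. Second, quantifying the error made by replacing the true super-critical contribution with the almost-sure limit term $2\Re(n^{\omega^k}\Xi_k v_k)$: this error must be $o(\sqrt{n\log n})$, which will follow from an $L^2$ rate of convergence of the martingales associated with the large eigenvalues, i.e.\ a quantitative version of~\eqref{strong}. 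Third, establishing joint convergence across all spectral components simultaneously so that the small-eigenvalue projections, though Gaussian on their own $\sqrt{n}$ scale, indeed contribute nothing to the limiting covariance. The contraction-method framework is especially well-suited because it handles the first and third issues in one stroke, while the second reduces to a quantitative refinement of the already-known almost-sure approximation~\eqref{strong}.
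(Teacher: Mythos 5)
Your proposal follows essentially the same route as the paper: spectral decomposition into eigen-projections, classification by $\lambda_k=\Re(\omega^k)$ into large ($>1/2$), critical ($=1/2$, present iff $6\mid m$), and small ($<1/2$) regimes, subtraction of the a.s.\ periodic terms $2\Re(n^{\omega^k}\Xi_k v_k)$ for the large projections, observation that the critical components have variance of order $n\log n$ from the harmonic series so that the small (and large-residual) contributions vanish under $\sqrt{n\log n}$ normalization, and a contraction-method argument in the Zolotarev metric on a recursive splitting of the urn into two sub-urns. This is precisely how the paper proceeds (Proposition~\ref{prop1} plus the rearrangement argument and Slutsky's lemma), with the sub-urn splitting made precise via the binary search tree embedding.
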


Note, that the sum $\sum_{k=1}^r$ in Theorem \ref{thm1} is empty for $2\le m\le 5$, also in Theorem \ref{thm2} for $m=6$. Hence, for $2\le m\le 6$ our theorems reduce to the central limit laws of Janson \cite{Ja83,Ja04,Ja06}. For $m\in\{7,8,9,10,11\}$ Theorem \ref{thm1} shows that there is a direct normalization of the residuals which implies a  multivariate central limit law (CLT).  The case $m=12$  also admits a multivariate CLT under a different scaling, see Theorem \ref{thm2}. For $m>12$ the residuals cannot directly be normalized to obtain convergence. However, Theorems \ref{thm1} and \ref{thm2} describe refined residuals which satisfy a multivariate CLT for all $m>12$. These can be considered as  asymptotic expansions of the random variables $R_n$.

The convergences  in Theorems \ref{thm1} and \ref{thm2} also hold for all moments. For an expansion of $\E[R_n]$ see (\ref{exp_mean}).

We conjecture Theorems \ref{thm1} and \ref{thm2} as being prototypical for a phenomenon  to occur frequently in related random combinatorial structures. E.g., we expect similar behavior for other urn models with analog almost sure random periodic behavior, see Janson \cite[Theorem 3.24]{Ja04}, further for the size of random $m$-ary search trees, cf.~\cite{chhw01}, or  for the number of leaves in random $d$-dimensional (point) quadtrees \cite{chfuhw07}. (For the latter two instances only the case of Theorem \ref{thm1} is expected to occur.)

The remainder of the present paper contains a proof of Theorems \ref{thm1} and \ref{thm2}. An outline of the proof is given in Section \ref{sec2}, where also the occurrence of the contributions $\Re(n^{\omega^k}\Xi_k v_k)$ in  Theorems \ref{thm1} and \ref{thm2} is  explained. Roughly, our proof combines a spectral decomposition of the residuals and estimates of their mixed moments with a recursive decomposition of the urn process and stochastic fixed-point arguments. In work in progress of the first mentioned author of the present paper also an alternative route via martingales is being explored. Within the details of the  proofs of the present paper we make mildly use of martingales. However, we could also work out the whole proof without drawing back to any martingale which may provide a useful general technique for related applications where no martingales are available.

The results of this paper were announced in the extended abstract \cite{mune16}.

\section{Explanation of the result and outline of the proof} \label{sec2}
In this section we set out our approach towards the proof of Theorems \ref{thm1} and \ref{thm2} and explain the occurrence of
the summands $\Re(n^{\omega^k}\Xi_k v_k
)$ and the normal fluctuation in the theorems.

We first recall known asymptotic behavior and a
spectral decomposition of
$R_n$ which are used subsequently. Then we state a more refined result
on certain projections of residuals in Proposition \ref{prop1} which
directly implies Theorems \ref{thm1} and \ref{thm2}. Finally, an outline
of the proof of Proposition \ref{prop1} is given. Technical steps and
estimates are then carried out in Section \ref{sec:3}. Throughout, we fix $m\ge 2$.

For the cyclic urn with $m$ colors we consider an initial configuration of one ball of type $0$ and write $R_n$ for the composition vector after $n$ steps. Its dynamics is summarized in the $m\times m$ replacement matrix
\begin{equation} \label{R}
{\cal A}:=
\begin{pmatrix}
0 & 1 & 0& \cdot & \cdot & 0 & 0 \\
0 & 0 & 1& \cdot & \cdot & 0 & 0 \\
0 & 0 & 0& \cdot & \cdot & \cdot & \cdot \\
\cdot & \cdot &  \cdot & \cdot & \cdot & \cdot & \cdot \\
\cdot & \cdot & \cdot & \cdot & \cdot & 0 & 1 \\
1 & 0 & 0& \cdot & \cdot & 0 & 0
\end{pmatrix},
\end{equation}
where ${\cal A}_{ij}$ indicates that after drawing a ball of type $i$ it is placed back together with ${\cal A}_{ij}$ balls of type $j$ for all $0\le i,j\le m-1$.
The canonical filtration is given by the $\sigma$-fields ${\cal F}_n=\sigma(R_0,\ldots,R_n)$ for $n\ge 0$.
The dynamics of the urn process imply the well-known almost sure relation
\begin{align}\label{bed_erw}
\mathbb{E}\left[R_{n+1} \,|\, \mathcal{F}_n \right]
= \sum_{k=0}^{m-1} \frac{R_{n,k}}{n+1} (R_{n} + {\cal A}^t e_k )
= \left(\mathrm{Id}_m + \frac{1}{n+1}{\cal A}^t \right)R_n,\qquad n\ge 0.
\end{align}
Here, $\mathrm{Id}_m$ denotes the $m\times m$ identity matrix and ${\cal A}^t$ the transpose of ${\cal A}$.

Note that $v_0$ has the direction of the drift vector $\mathbf{1}$ in Theorems \ref{thm1} and \ref{thm2} and, for $m\ge 7$, the vector $v_1$ determines  the direction of the a.s.~periodic fluctuations around the drift. By diagonalizing the matrices on the right hand side of (\ref{bed_erw}) one finds an exact asymptotic expression for the mean of $R_n$, cf.~\cite[Lemma 6.7]{knne14}.
With
\begin{equation*}
\xi_k := \frac{2}{\Gamma(1+\omega^k)}v_k,\quad 1\le k \le r,
\end{equation*}
equation (\ref{bed_erw}) implies the expansion, as $n\to\infty$,
\begin{equation}\label{exp_mean}
\mathbb{E}\left[R_n\right] = \frac{n+1}{m}\mathbf{1} +\sum_{k=1}^r \Re(n^{i\mu_k}\xi_k)n^{\lambda_k} + \left\{ \begin{array}{ll}
  o(\sqrt{n}),&\mbox{if } 6 \nmid m,\\
  \mathrm{O}(\sqrt{n}),&\mbox{if } 6 \mid m.
  \end{array}\right.
\end{equation}
It is also known  that the variances and covariances of the numbers of balls of each color are of the order $n^{2\lambda_1}$ when $m\ge 7$, with  appropriate periodic prefactors. This explains the normalization $n^{-\lambda_1}(R_n-\frac{n+1}{m}\mathbf{1})$ in (\ref{strong}).
The analysis of the asymptotic distribution as stated in (\ref{strong}) has been carried out by different techniques (partly only in a weak sense), by embedding into continuous time multitype branching processes, by (more direct) use of martingale arguments, and by stochastic fixed-point arguments, see \cite{Ja04,Pou05,knne14}.

\begin{figure}[tt]
\begin{center}
\includegraphics[width=6cm]{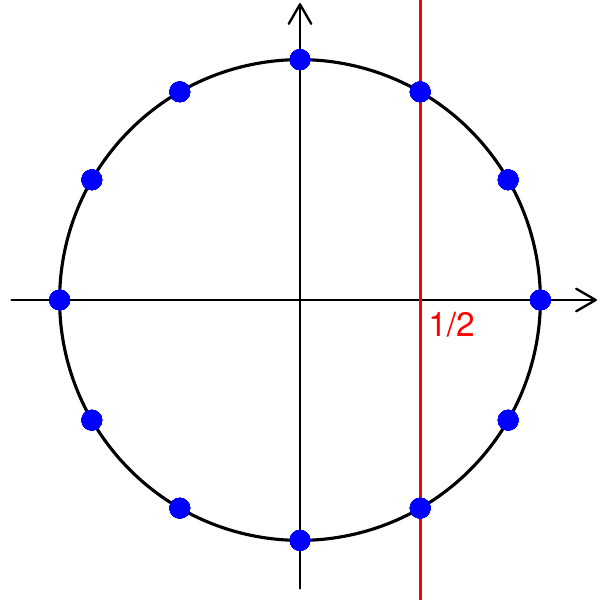}\qquad
\includegraphics[width=6cm]{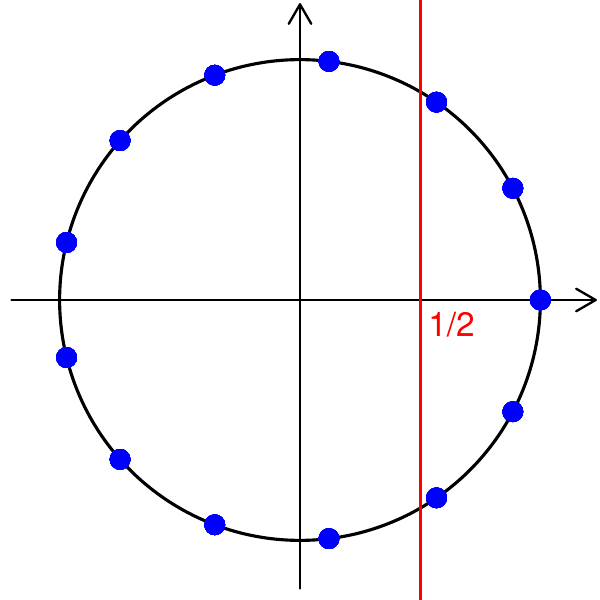}
\end{center}
\caption{Plots of the eigenvalues of $\mathcal{A}$ for $m=12$ (left
  picture) and $m=13$ (right picture). They correspond to
  contributions to $R_n$ as follows: The eigenvalue $\lambda_0=1$
  corresponds to the deterministic drift. All other eigenvalues with
  $\lambda_k>\frac{1}{2}$ correspond to almost sure periodic
  contributions with normal fluctuations around the periodic
  vector. The eigenvalues with $\lambda_k \le \frac{1}{2}$ correspond
  to contributions which only consist of a normal fluctuation. All
  normal fluctuations are of the same order if $6 \nmid m$. They
  compose an overall fluctuation of rank $m-1$, see Theorem
  \ref{thm1}. If $6\mid m$ then the eigenspaces with
  $\lambda_k=\frac{1}{2}$ contribute normal fluctuations of larger
  orders which dominate the contributions from all other
  eigenspaces. The overall fluctuations are then just the fluctuations
  from the two eigenspaces $m/6$ and $5m/6$ with $\lambda_{m/6}=\lambda_{5m/6}=\frac{1}{2}$ and of rank $2$, see Theorem \ref{thm2}.}
\end{figure}

For our further analysis we use a spectral decomposition of the process $(R_n)_{n\ge 0}$. This also leads to an explanation of the terms and fluctuations appearing in Theorems \ref{thm1} and \ref{thm2}, see the comments after the proof of Theorem \ref{thm2} in the present section.

We denote by $\pi_k$ the orthogonal projection onto the eigenspace in $\C^m$ spanned by $v_k$ for $0\le k\le m-1$. Hence, we have
\begin{align*}
R_n&=\pi_0(R_n)+\sum_{k=1}^{\lceil m/2\rceil -1} (\pi_{k}+\pi_{m-k})(R_n) + \ind_{\{ m \mbox{ even}\}}\pi_{m/2}(R_n) =\sum_{k=0}^{m-1} u_{k}(R_n) v_k,
\end{align*}
where $u_0 , \ldots, u_{m-1}$ denotes the basis dual to $v_0, \ldots, v_{m-1}$, as ${\cal A}$ is diagonizable. We have deterministically $\pi_0(R_n)=\frac{n+1}{m}\mathbf{1}$. For the other projections $\pi_k(R_n)$  one has  similar periodic behavior as for the composition vector $R_n$ in (\ref{strong}), as long as $\lambda_k>\frac{1}{2}$. Commonly, projections $\pi_k(R_n)$ are called `large', if $\lambda_k>\frac{1}{2}$, since their  magnitudes are larger than $\sqrt{n}$. Projections $\pi_k$ with $\lambda_k\le \frac{1}{2}$ are called `small'.

For large projections, i.e. for all $k\geq 1$ with
$\lambda_k>\frac{1}{2}$, we set
\begin{align} \label{grenz}
X_{n,k}:= \frac{1}{\sqrt{n}}
\left(
\begin{array}{c}
\Re\left(u_k(R_n - \E[R_n])-n^{\omega^k}\Xi_k\right) \\
 \Im\left(u_k(R_n - \E[R_n])-n^{\omega^k}\Xi_k\right)
\end{array} \right), \quad n \geq 1,
\end{align}
with an appropriate complex valued random variable $\Xi_k$, defined as a martingale limit in (\ref{mg_conv}), Section \ref{sec:31}. The behavior of the small
projections $\pi_k(R_n)$ has already been determined, see \cite{Ja04,mai14}. For those $k$ with $\lambda_k<\frac{1}{2}$ we have for $n \geq 1$
\begin{align}\label{rn1}
X_{n,k}:=\frac{1}{\sqrt{n}}
\left(
\begin{array}{c}
\Re(u_k(R_n - \E[R_n])) \\
 \Im(u_k(R_n - \E[R_n]))
\end{array} \right)
  \stackrel{\mathrm{d}}{\longrightarrow} {\cal N}
\left(0,\frac{\mathrm{Id}_2}{1-2\lambda_k}\right).
\end{align}
If $m$ is even, then for $n \geq 1$, $X_{n,m/2}:=n^{-1/2}u_{m/2}(R_n)
\stackrel{\mathrm{d}}{\longrightarrow} {\cal N}(0,1/3)$. For $m=2$,
the last mentioned result
has already been established by Freedman \cite[Theorem 5.1]{Free65}.

Finally, if $6 \mid m$, there are two eigenvalues with real parts equal to $\frac{1}{2}$. Compared to the other small components, the scaling of the associated projections requires an additional $\sqrt{\log n}$ factor for convergence: For $k\in\{m/6, 5m/6\}$ and $n \geq 1$,
\begin{align}\label{rn2}
X_{n,k}:=\frac{1}{\sqrt{n \log n}}
\left(
\begin{array}{c}
\Re(u_k(R_n - \E[R_n])) \\
 \Im(u_k(R_n - \E[R_n]))
\end{array} \right)
  \stackrel{\mathrm{d}}{\longrightarrow} {\cal N}\left(0,\frac{1}{2}\mathrm{Id}_2\right).
\end{align}
We prove the convergence of the variances and covariances of all $X_{n,k}$ in Section \ref{sec:31}. Set $X_{n,0}:= u_0(R_n - \E[R_n])= 0$ and $X_0 := (0, \ldots, 0)^t$.

To summarize, $X_{n,0}, \ldots, X_{n,m-1}$ describe the normalized fluctuations along the projections. For each pair of complex conjugate eigenvalues, there is one $X_{n,k}$ that captures the behaviour of the corresponding real and imaginary part. Small projections are known to be asymptotically normally distributed, see (\ref{rn1}). As a main contribution of the present paper we show that residuals of large projections as normalized in (\ref{grenz}) are also asymptotically normal. Moreover, fluctuations along different proections are asymptotically independent: 

\begin{prop}\label{prop1}
Assume that $6 \mid m$. For the vector $Z_n:=(X_{n,1}, \ldots,
X_{n,m/2}) \in \R^{m-1}$ defined for $n \geq 0$ in
(\ref{grenz})--(\ref{rn2}) we have, as $n\to\infty$, that \begin{align*}
Z_n \stackrel{d}{\longrightarrow} {\cal N}\left(0,M_m\right),
\end{align*}
with
\begin{align}\label{limitmatrix}
M_m:=\frac{1}{2} \mathrm{diag}\left(\frac{\mathrm{Id}_2}{|2\lambda_1-1|},\ldots, \frac{\mathrm{Id}_2}{|2\lambda_r-1|}, \mathrm{Id}_2, \frac{\mathrm{Id}_2}{|2\lambda_{r+2}-1|}, \ldots, \frac{\mathrm{Id}_2}{|2\lambda_{m/2-1}-1|}, \frac{2}{3}\right).
\end{align}
\end{prop}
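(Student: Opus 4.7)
The strategy is the contraction method in the Zolotarev metric $\zeta_3$, combined with martingale moment estimates for the covariance. For each $k$ with $\lambda_k > 0$, the process $M_n^{(k)} := \bigl(\prod_{j=1}^n (1+\omega^k/j)\bigr)^{-1} u_k(R_n - \E[R_n])$ is a complex-valued $(\mathcal{F}_n)$-martingale; when $\lambda_k > \tfrac12$ it is $L^2$-bounded with a.s.\ and $L^2$ limit $\Xi_k$, which is precisely the object appearing in (\ref{grenz}). The proposition then splits into two tasks: (a) show $\Cov(Z_n) \to M_m$, including that off-diagonal blocks vanish, and (b) show joint asymptotic normality. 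For (a), martingale orthogonality of increments gives, for $\lambda_k > \tfrac12$,
\begin{align*}
\E\bigl|u_k(R_n-\E[R_n]) - n^{\omega^k}\Xi_k\bigr|^2 = |n^{\omega^k}|^2 \sum_{j>n} \E\bigl|M_j^{(k)} - M_{j-1}^{(k)}\bigr|^2,
\end{align*}
with conditional increment variance of order $j^{-2\lambda_k-1}$ (read off from the urn transition), yielding the limit $\tfrac{1}{|2\lambda_k-1|}$ after dividing by $n$. The critical case $\lambda_k=\tfrac12$ produces a harmonic sum and the extra factor $\log n$; the small case is already (\ref{rn1})--(\ref{rn2}). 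Off-diagonal covariances across distinct eigenspaces vanish because the conditional cross-increments involve the eigenvector inner product $v_k^* v_\ell = 0$ for $k \ne \ell$.

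For (b) I would derive a distributional recursion by decomposing the urn after its first step into two independent subtrees rooted at the original type-$0$ ball and at its first (type-$1$) child. Their sizes $I_n$ and $n+1-I_n$ form a two-color P\'olya urn, with $I_n/n \to U \sim \text{Uniform}(0,1)$. This gives
\begin{align*}
R_n \stackrel{d}{=} R^{(1)}_{I_n - 1} + \tau R^{(2)}_{n - I_n}, \qquad u_k(\tau w) = \omega^k u_k(w),
\end{align*}
where $\tau$ is the cyclic type shift and $R^{(1)}, R^{(2)}$ are independent copies of the urn process. Applying the projections $u_k$, subtracting the appropriate $n^{\omega^k}\Xi_k$ (which satisfies its own limiting fixed-point equation via the martingale splitting), and rescaling by $\sqrt n$ (or $\sqrt{n\log n}$ on the critical block) produces a coupled recursion
\begin{align*}
Z_n \stackrel{d}{=} A^{(n)} Z^{(1)}_{I_n-1} + B^{(n)} Z^{(2)}_{n-I_n} + b_n,
\end{align*}
where $A^{(n)}, B^{(n)}$ converge to block-diagonal matrices whose $2\times 2$ blocks are $\sqrt U\,\mathrm{Id}_2$ and $\sqrt{1-U}$ times a rotation by angle $2\pi k/m$, while $b_n \to 0$. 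Since $\E[U^{3/2}+(1-U)^{3/2}] = \tfrac{4}{5} < 1$, the limit map is a $\zeta_3$-contraction on centered distributions with covariance $M_m$; its unique fixed point is $\mathcal{N}(0, M_m)$, and the standard contraction-method convergence theorem transfers $\zeta_3$-convergence of $Z_n$ to this Gaussian.

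The main obstacle is the simultaneous handling of the two distinct scaling rates $\sqrt n$ and $\sqrt{n\log n}$ in a single joint recursion. On the critical block, the left-hand normalization $\sqrt{n\log n}$ differs from $\sqrt{I_n \log I_n}+\sqrt{(n-I_n)\log(n-I_n)}$ by terms that are $o(1)$ but must be carefully absorbed into $b_n$, using $\log I_n/\log n \to 1$ on $\{I_n/n \to U \in (0,1)\}$. A second subtle point is that (\ref{exp_mean}) has remainder only $\mathrm{O}(\sqrt n)$ when $6 \mid m$, so $\E[R_n]$ contributes to $b_n$ a term that vanishes only after dividing by the extra $\sqrt{\log n}$ on the critical block. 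Finally, applicability of $\zeta_3$ requires uniform finite third moments of $Z_n$, which can be obtained by bootstrapping the recursion together with the finite $\zeta_3$-norm of the Gaussian limit.
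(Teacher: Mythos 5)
Your overall strategy coincides with the paper's: spectral decomposition of $R_n$, martingale estimates for the covariance, the first-step/binary-search-tree subtree decomposition producing a distributional recursion, and the contraction method in the Zolotarev metric $\zeta_3$. You also correctly flag the key technical subtleties (rotation blocks in the coefficients, the separate $\sqrt{\log n}$ scaling on the critical eigenspace, the contraction constant $\E[U^{3/2}+(1-U)^{3/2}]=\frac{4}{5}<1$, third-moment boundedness by bootstrapping).

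There is, however, a genuine gap in your covariance analysis. You assert that the off-diagonal covariances across distinct eigenspaces vanish ``because the conditional cross-increments involve the eigenvector inner product $v_k^*v_\ell=0$ for $k\neq\ell$.'' This is not correct: the one-step conditional cross-increment is
\[
\E\bigl[\,u_k(R_n-R_{n-1})\,\overline{u_\ell(R_n-R_{n-1})}\,\big|\,\mathcal F_{n-1}\bigr]=\frac{\omega^{k-\ell}}{n}\,u_{k-\ell}(R_{n-1}),
\]
which is generically nonzero for $k\neq\ell$ and of order $n^{\lambda_{k-\ell}-1}$; orthogonality of the eigenvectors plays no role here. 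What actually makes the normalized off-diagonal limits vanish is that $\lambda_{k-\ell}<1$ for $k\neq\ell$, so these contributions sum to $\mathrm{o}(n)$, combined with a case analysis of the explicit mixed moments from Lemma~\ref{erw} and, for the large eigenspaces, cross-moments involving $\Xi_k$, $\Xi_\ell$ (the analogue of Lemma~\ref{mgconv} for pairs), which your proposal does not supply. Separately, a small slip: you give the conditional martingale increment variance as order $j^{-2\lambda_k-1}$, but it is $j^{-2\lambda_k}$ as in the proof of Lemma~\ref{mgconv}; that is what produces your stated limit $\frac{1}{|2\lambda_k-1|}$ after summing the tail.
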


In the case $6\nmid m$ Proposition \ref{prop1} holds as well. The only difference is that there is no $k$ with $\lambda_k=\frac{1}{2}$ and thus the matrix corresponding to $M_m$ for the case $6 \nmid m$ does not have the block $\frac{1}{2}\mathrm{Id}_2$. If $m$ is odd, also the last block $\frac{1}{3}$ is not present. 

Proposition \ref{prop1} (and its version for $6\nmid m$) directly imply Theorems \ref{thm1} and \ref{thm2}:

\proof[Proof of Theorem \ref{thm1}]
Note that $6 \nmid m$ implies that there is no $0\le k \le m-1$ with
$\lambda_k=\frac{1}{2}$. We obtain
\begin{align*}
\lefteqn{\frac{1}{\sqrt{n}}\left(R_n- \E[R_n] - \sum_{k=1}^r 2
\Re\left(n^{\omega^k}\Xi_k v_k\right)\right)}\\
&= \frac{1}{\sqrt{n}}\left(\sum_{k=1}^r \left\{2 \Re\left(\left[u_k(R_n-
  \E[R_n]) - n^{\omega^k}\Xi_k \right]v_k\right)\right\} \right.\\
&\left.\qquad\qquad\quad ~+ \sum_{k=r+1}^{\lceil m/2 \rceil-1}2\Re(u_k(R_n- \E[R_n])v_k) +\ind_{\{m \mbox{ even}\}} u_{m/2}(R_n- \E[R_n])v_{m/2}\right)\\
&= 2Z_{n,1}\Re(v_1) - 2 Z_{n,2}\Im(v_1) +
  \cdots+\ind_{\{m \mbox{ even}\}} Z_{n,m-1} v_{m/2}\\
&\stackrel{\mathrm{d}}{\longrightarrow} {\cal N}\left(0,\Sigma^{(m)}\right),
\end{align*}
by Proposition \ref{prop1} and the continuous mapping theorem, where
$\Sigma^{(m)}$ is as in the statement of Theorem \ref{thm1}. It is immediate that the image of $\Sigma^{(m)}$ in $\mathbb{R}^m$ is $\mathrm{span}\{\Re\left(v_1\right), \Im\left(v_1\right), \ldots, v_{m/2}\}$, if $2 \mid m$, and $\mathrm{span}\{\Re\left(v_1\right), \Im\left(v_1\right), \ldots, \Im\left(v_{(m-1)/2}\right)\}$ otherwise, hence the rank of $\Sigma^{(m)}$ is $m-1$.
\qed
\proof[Proof of Theorem \ref{thm2}]
Note that $6 \mid m$ implies that there is the pair  $\lambda_{m/6}=\lambda_{5m/6}=\frac{1}{2}$. Rearranging terms as in the proof of Theorem \ref{thm1} we obtain
\begin{align*}
&\frac{1}{\sqrt{n\log(n)}}\left(R_n- \E[R_n]- \sum_{k=1}^r 2
\Re\left(n^{\omega^k}\Xi_k v_k\right)\right) \\
&
= \frac{1}{\sqrt{\log(n)}}\sum_{k=1, k\not= m/6}^{m/2-1}2
  (Z_{n,2k-1}\Re(v_k) - Z_{n,2k}\Im(v_k))  \\
&+ 2
  (Z_{n,m/3-1}\Re(v_{m/6}) - Z_{n,m/3}\Im(v_{m/6})) +
 \frac{1}{\sqrt{\log(n)}} Z_{n,m-1} v_{m/2} \stackrel{\mathrm{d}}{\longrightarrow} {\cal N}\left(0,\Sigma^{(m)}\right),
\end{align*}
by Proposition \ref{prop1} and Slutsky's Lemma, where $\Sigma^{(m)}$
is as in Theorem \ref{thm2}. Again, it is immediate that the image of $\Sigma^{(m)}$ in $\mathbb{R}^m$ is $\mathrm{span}\{\Re\left(v_{m/6}\right),  \Im\left(v_{m/6}\right)\}$, hence its rank is $2$.
\qed

\vspace{0.5 cm}
The proofs of Theorems \ref{thm1} and \ref{thm2} via Proposition \ref{prop1} indicate the role of the terms $\Re(n^{\omega^k}\Xi_k v_k)$ in the overall Gaussian fluctuation, see also Figure 1: All eigenspaces with $\lambda_k>\frac{1}{2}$ (excluding the deterministic drift for $\lambda_k=1$) contribute two asymptotic components: First, there is the almost sure periodic component
\begin{align*}
\Re(n^{\omega^k}\Xi_k v_k) = n^{\lambda_k} \Re\left(\exp(i\mu_k\log n)\Xi_kv_k\right)
\end{align*}
of order $n^{\lambda_k}$ with a random periodic factor, periodic roughly in $\log n$. Second, there is a normal fluctuation (in distribution) of order $\sqrt{n}$. All eigenspaces with $\lambda_k<\frac{1}{2}$  add a contribution of order $\sqrt{n}$ to the normal fluctuation which is the visible order within these eigenspaces. For $6 \mid m$, there are eigenvalues with $\lambda_k=\frac{1}{2}$ and the normal fluctuation is of order $\sqrt{n \log n}$ in the corresponding two  eigenspaces. According to Proposition \ref{prop1} all these fluctuations within the eigenspaces are asymptotically independent, which explains the overall asymptotic normal fluctuation. Since this normal fluctuation is of order $\sqrt{n}$ and $\sqrt{n \log n}$, respectively, all the almost sure periodic contributions from the eigenspaces with $\lambda_k>\frac{1}{2}$ are visible as well.

\vspace{0.5 cm}
To prove Proposition \ref{prop1} we first derive moments and mixed moments in Section \ref{sec:31} needed for the normalization. In Section \ref{sec:33} a pointwise recursive equation
 for the complex random variables $\Xi_1,\ldots,\Xi_r$ is obtained together with a recurrence for the sequence $(R_n)_{n\ge 0}$ which extends to a recurrence for the residuals in (\ref{strong}) as well as to the residuals $Z_n$ of the projections  of the $R_n$, see equation (\ref{rec_zn}) in Section \ref{sec:33}. Equation (\ref{rec_zn}) is then the starting point to show the convergence in Proposition \ref{prop1}. For this, a stochastic fixed-point argument in the context of the contraction method within the Zolotarev metric $\zeta_3$, see \cite{neru04} for general reference, is used. Then, we draw back to an approach to bound the Zolotarev distance and some estimates from \cite{ne14}  where a related, but simpler, (univariate) problem was discussed.

\section{Proof of Proposition \ref{prop1}} \label{sec:3}
We start with estimates for the covariance matrix of the $Z_n$ appearing in Proposition \ref{prop1} in section \ref{sec:31}. In section \ref{sec:33} we derive the recurrence (\ref{rec_zn}) for the $Z_n$. The use of the Zolotarev metric $\zeta_3$ requires a slightly modified version of recurrence (\ref{rec_zn}). This is explained in section \ref{zolo_sec}, see in particular the quantities $N_n$ in (\ref{RekN}) which are the modified ´versions of the $Z_n$. Then in section  \ref{sec_technical} asymptotics for the coefficients appearing in the recurrence (\ref{rec_zn}) of $Z_n$ and $N_n$ respectively are derived. Based on these asymptotics finally in section \ref{sec:34} convergence of the $N_n$ is shown within the Zolotarev metric, which implies convergence in distribution of the $Z_n$ as stated in Proposition \ref{prop1}.

Recall that Proposition \ref{prop1} assumes that $6 \mid m$. As mentioned before, the analoguous result for $6 \nmid m$ is true and can be proved along the same lines by some minor modifications. 

\subsection{Convergence of the Covariance Matrix}\label{sec:31}
As indicated in Section \ref{sec2}, we study the centered process
$(R_n-\E[R_n])_{n \geq 0}$ via its spectral decomposition with respect
to the orthogonal basis $\{ v_k: 0 \leq k < m\}$  of the unitary
vector space $\C^m$, i.e.
\begin{align*}
 R_n - \mathbb{E}[ R_n] = \sum_{k=0}^{m-1} \pi_k\left(R_n - \E[R_n]\right)
=\sum_{k=0}^{m-1} u_k\left(R_n - \mathbb{E}[R_n ]\right)v_k,
\end{align*}
where $u_k\left(w\right):=  1 \cdot w_0 + \omega^k \cdot w_1 +
\cdots + \omega^{(m-1)k} \cdot w_{m-1}$ for $w \in \mathbb{C}^m$.
The evolution (\ref{bed_erw}) of the process implies that for $n \geq 1$, there is a complex normalization
\begin{align}\label{norm_rn_mkn}
M_{k,n} := \frac{\Gamma(n+1)}{\Gamma(n+1+\omega^k)} u_k\left(R_n - \mathbb{E}\left[R_n \right]\right) = \begin{cases}
\frac{\Gamma(n+1)}{\Gamma(n+1+\omega^k)} u_k\left(R_n\right) - \frac{1}{\Gamma(1+\omega^k)}, & k \not= m/2, \\
\frac{\Gamma(n+1)}{\Gamma(n+1+\omega^k)} u_k\left(R_n\right), &  k=m/2,
\end{cases}
\end{align}
that turns all the  eigenspace coefficients, $0\le k\le m-1$, into centered martingales. We set  $M_{k,0}:=0$. Depending
on $\lambda_k$, these
martingales are known to exhibit two different kinds of asymptotic
behavior, see \cite{Ja04,Ja06,Pou05}:
For all $k \in \{0, \ldots, m-1\}$ with $\lambda_k=\Re\left(\omega^k\right)>1/2$, there exists a complex valued random variable $\Xi_{k}$ such that, as $n \to \infty$, we have
\begin{align}\label{mg_conv}
M_{k,n} \to \Xi_k\; \mbox{ almost surely},
\end{align}
where the convergence also holds in $\mathrm{L}_p$ for every $p \geq
1$. Note that the $\Xi_k$ in (\ref{mg_conv}) are identical
with the $\Xi_k$ in (\ref{grenz}) and in Theorems \ref{thm1}
and \ref{thm2}.
The $M_{k,n}$ with
$\lambda_k=\Re\left(\omega^k\right)\le 1/2$ are known to converge in distribution, after proper normalization, to normal limit laws.

From Section \ref{sec:33} on, our analysis will also require to start the cyclic urn process with one ball of type $j\in\{0,\ldots,m-1\}$. The corresponding composition vector $R_n^{[j]}$ is obtained in distribution by the relation
\begin{align}\label{shift}
\left(R_n^{[j]}\right)_{n\ge 0} \stackrel{d}{=}\left(\left({\cal A}^t\right)^j R_n\right)_{n\ge 0},\quad 0\le j\le m-1,
\end{align}
with the replacement matrix ${\cal A}$ from (\ref{R}) and  where $\stackrel{d}{=}$ denotes equality in distribution. Similar to the identity (\ref{shift}), the corresponding martingales $M_{k,n}^{[j]}$ satisfy
\begin{equation*}
M_{k,n}^{[j+1]} \stackrel{d}{=} \omega^{k} M_{k,n}^{[j]},
\end{equation*}
with convention $M_{k,n}^{[m]}:= M_{k,n}^{[0]}$.

Our subsequent analysis requires asymptotics of moments and of correlations between the $u_k(R_n)$. Exploiting the dynamics of the urn in (\ref{bed_erw}), elementary calculations imply that:
\begin{lem}\label{erw}
For $k \in \{0, \ldots, m-1\}$, we have
\begin{equation*}
\mathbb{E}\left[u_k\left(R_n\right)\right] = \sum_{t=0}^{m-1}
\omega^{kt} \mathbb{E}\left[R_{n,t}\right] = \begin{cases}
\frac{\Gamma(n+1+\omega^k)}{\Gamma(n+1)\Gamma(1+\omega^k)}, \qquad k
\not= m/2,\\
0, \hspace{2.8 cm} k=m/2.
\end{cases}
\end{equation*}
For $k, \ell \in \{0, \ldots, m-1\}$,
\begin{eqnarray} \label{secmo}
\lefteqn{\mathbb{E}\left[u_{k}\left(R_n\right) u_{\ell}\left(R_n\right)\right]}\nonumber\\
&=&\prod_{s=1}^{n}\left(\frac{s+\omega^{k}+\omega^{\ell}}{s}\right)
~+ \sum_{s=1}^{n}\frac{\omega^{k+\ell}}{s}\prod_{t=1}^{s-1}\left( \frac{t+\omega^{k+\ell}}{t} \right)\prod_{t=s+1}^{n}\left(\frac{t+\omega^{k}+\omega^{\ell}}{t}\right).
\end{eqnarray}
\end{lem}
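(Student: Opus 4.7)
My plan is to derive both identities by applying the linear functionals $u_k$ (and the bilinear pairing $u_k\otimes u_\ell$) to the one-step conditional mean relation (\ref{bed_erw}) and solving the resulting scalar first-order linear recurrences for $\E[u_k(R_n)]$ and $\E[u_k(R_n)u_\ell(R_n)]$.

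For the first identity, the key observation is that $\mathcal A^t$ acts on $\C^m$ as the cyclic shift $e_j \mapsto e_{j+1\bmod m}$, so $u_k$ is a left eigenvector of $\mathcal A^t$ with eigenvalue $\omega^k$; equivalently, $u_k(\mathcal A^t w) = \omega^k u_k(w)$ for every $w\in\C^m$. Applying $u_k$ to (\ref{bed_erw}) and then taking unconditional expectations yields the scalar recursion
\begin{equation*}
\E[u_k(R_{n+1})] = \frac{n+1+\omega^k}{n+1}\,\E[u_k(R_n)].
\end{equation*}
Iterating from $\E[u_k(R_0)]=u_k(e_0)=1$ gives the product $\prod_{s=1}^{n}(s+\omega^k)/s$, which reduces to the claimed Gamma ratio via $\Gamma(z+1)=z\Gamma(z)$. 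The case $k=m/2$ is automatic, since $\omega^{m/2}=-1$ makes the $s=1$ factor equal to $0$.

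For the second identity I would exploit the single-ball update $R_{n+1}=R_n+e_{J_n+1\bmod m}$, where $J_n$ is the type drawn at step $n+1$ and satisfies $\Prob(J_n=j\mid\mathcal F_n)=R_{n,j}/(n+1)$. Expanding $u_k(R_{n+1})u_\ell(R_{n+1})$ with $u_k(e_{J_n+1})=\omega^{k(J_n+1)}$ and evaluating $\E[\omega^{pJ_n}\mid\mathcal F_n]=u_p(R_n)/(n+1)$ for $p\in\{k,\ell,k+\ell\}$ leads to
\begin{equation*}
\E\bigl[u_k(R_{n+1})u_\ell(R_{n+1})\mid\mathcal F_n\bigr]
= \frac{n+1+\omega^k+\omega^\ell}{n+1}\,u_k(R_n)u_\ell(R_n)
+ \frac{\omega^{k+\ell}}{n+1}\,u_{k+\ell}(R_n),
\end{equation*}
with indices taken modulo $m$. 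This is again a scalar first-order linear recurrence, now inhomogeneous, with forcing term built from the already-known mean $\E[u_{k+\ell}(R_n)]$. Dividing by the integrating factor $P_n:=\prod_{t=1}^{n}(t+\omega^k+\omega^\ell)/t$ telescopes the homogeneous part, and summing the resulting discrete variation-of-parameters identity, then substituting the first identity, delivers (\ref{secmo}) directly.

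The only real obstacle is bookkeeping in the degenerate cases where $\omega^k+\omega^\ell$ or $\omega^{k+\ell}$ makes one of the Gamma products singular; in each such case the formulas are to be read as the corresponding finite products (with empty products equal to $1$), and the recurrence derivation itself remains valid throughout. No probabilistic input beyond (\ref{bed_erw}) and a single elementary iteration is required.
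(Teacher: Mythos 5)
Your proposal is correct and takes essentially the same route as the paper: the paper also reads off the first identity from (\ref{bed_erw}) using that $u_k$ is an eigenvector of $\mathcal{A}^t$ with eigenvalue $\omega^k$, and for the second identity it states the same one-step conditional recursion $\E[u_k(R_n)u_\ell(R_n)\mid\mathcal F_{n-1}] = (1+\tfrac{\omega^k+\omega^\ell}{n})u_k(R_{n-1})u_\ell(R_{n-1}) + \tfrac{\omega^{k+\ell}}{n}u_{k+\ell}(R_{n-1})$ (obtained from the single-ball update) and iterates, leaving the variation-of-parameters telescoping implicit. Your write-up simply spells out the derivation of the recursion from $R_{n+1}=R_n+e_{J_n+1}$ and the integrating-factor summation; those details are correct and match the paper's calculation.
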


\begin{proof}
The first two identities immediately follow from (\ref{bed_erw}). For
(\ref{secmo}), let $k, \ell \in \{0, \ldots, m-1\}$ and $n \geq 1$ and
note that, almost surely,
\begin{align*}
\E\left[ u_k(R_n)u_{\ell}(R_n) | \mathcal{F}_{n-1}\right] &= \left(1+\frac{\omega^k+\omega^{\ell}}{n}\right) u_k(R_{n-1})u_{\ell}(R_{n-1}) + \frac{\omega^{k+\ell}}{n}u_{k+\ell}(R_{n-1}).
\end{align*}
Here, we use the abbreviation $u_{k+\ell}(R_{n-1}):= u_{(k+\ell)\mod m}(R_{n-1})$.
\end{proof}
\noindent \textbf{Remark 1.} From (\ref{secmo}) we see that all
$\E[|u_k(R_n)|^2]$ with $\lambda_k <1/2$ are of linear order,
all $\E[|u_{k}(R_n)|^2]$ with $\lambda_k =1/2$ are of order $n \log n$ and all
$\E[|u_k(R_n)|^2]$
with $\lambda_k >1/2$ have order $n^{2\lambda_k}$. To make this more visible from (\ref{secmo}), we make some case distinctions.

We first consider the real cases $k=\ell=0$ and $k=\ell =m/2$ for $2 \mid m$:
\begin{align*}
\E\left[|u_{0}(R_n)|^2\right] &= (n+1)^2
\end{align*}
and, if $2 \mid m$,
\begin{align*}
\E\left[|u_{m/2}(R_n)|^2\right] &= \frac{n+1}{3}.
\end{align*}
Now, $\omega^k + \omega^{\ell}=-1$ only if $3 \mid m$ and $\{k, \ell\} = \{m/3, 2m/3\}$. In this case,
\begin{align*}
\E\left[|u_{m/3}(R_n)|^2\right] &= \frac{1}{n} \sum_{t=1}^{n} t = \frac{n+1}{2}.
\end{align*}
On the other hand, $\omega^{k+\ell} = \omega^k + \omega^{\ell}$ only if $6 \mid m$ and $\{k, \ell\} = \{m/6, 5m/6\}$. In this case, $\omega^k + \omega^{\ell}= 1$ and
\begin{align*}
\E\left[|u_{m/6}(R_n)|^2\right] &= (n+1) \sum_{t=1}^{n+1} \frac{1}{t} \sim n\log n.
\end{align*}
Thirdly, $\omega^k+\omega^{\ell}=0$ if and only if $2 \mid m$ and $\ell = k + m/2 \mod m$, so in this case
\begin{align*}
\E\left[u_{k}(R_n)u_{\ell}(R_n)\right] &=
\begin{cases}
0, \hspace{3.1 cm} \text{if} \quad \{k,l\} = \{0, m/2\}, \\
\frac{\Gamma(n+1+\omega^{k+\ell})}{\Gamma(n+1)\Gamma(1+\omega^{k+\ell})},
\qquad \text{else}.
\end{cases}
\end{align*}
Finally, $\omega^{k+\ell} = -1$ if $\lambda_k = - \lambda_{\ell}$ and
$\mu_k=\mu_{\ell}$ and then,
\begin{align*}
\E\left[ u_k(R_n)u_{\ell}(R_n) \right] &= \frac{\omega^k + \omega^{\ell}}{1 + \omega^k + \omega^{\ell}} \prod_{s=1}^n \left(1+\frac{\omega^k+\omega^{\ell}}{s}\right) \sim \frac{\omega^k + \omega^{\ell}}{\Gamma(2 + \omega^k + \omega^{\ell})} n^{\omega^k + \omega^{\ell}}.
\end{align*}
In all other cases,
\begin{align*}
\E\left[ u_k(R_n)u_{\ell}(R_n) \right]
&=\frac{1}{\omega^{k+\ell}-\omega^k-\omega^{\ell}}\left(\frac{\Gamma(n+1+\omega^{k+\ell})}{\Gamma(n+1)\Gamma(\omega^{k+\ell})} - \frac{\Gamma(n+1+\omega^k+\omega^{\ell})}{\Gamma(n+1)\Gamma(\omega^k+\omega^{\ell})} \right).
\end{align*}

\vspace{0.3 cm}
\noindent \textbf{Remark 2.} From (\ref{secmo}) we obtain the mixed moments of the corresponding real and imaginary parts via the identities
\begin{align*}
\E\left[ \Re(u_k(R_n))\Re(u_{\ell}(R_n) )\right] &= \frac{1}{2} \Re\left( \E\left[ u_k(R_n)u_{\ell}(R_n) \right] + \E\left[ u_k(R_n)u_{m-\ell}(R_n) \right]\right),\\
\E\left[ \Im(u_k(R_n))\Im(u_{\ell}(R_n) )\right] &= \frac{1}{2} \Re\left( \E\left[ u_k(R_n)u_{m-\ell}(R_n) \right] - \E\left[ u_k(R_n)u_{\ell}(R_n) \right]\right),\\
\E\left[ \Re(u_k(R_n))\Im(u_{\ell}(R_n) )\right] &= \frac{1}{2} \Im\left( \E\left[ u_k(R_n)u_{\ell}(R_n) \right] + \E\left[ u_{m-k}(R_n)u_{\ell}(R_n) \right]\right).
\end{align*}

\vspace{0.3 cm}

From Lemma \ref{erw}  we obtain the order of magnitude of the
$\mathrm{L}_2$-distance of the residuals of the martingales
$(M_{k,n})_{n\ge 0}$ with $\lambda_k>\frac{1}{2}$. This is needed for the proper normalization of these residuals.
\begin{lem} \label{mgconv}
For $k \geq 1$ such that $1/2 < \lambda_k < 1$ and $\Xi_k$ as in (\ref{mg_conv}), as $n \to \infty$,
\begin{equation*}
 \E\left[\left|M_{k,n} - \Xi_k\right|^2\right] \sim \frac{1}{2 \lambda_k -1}n^{1-2\lambda_k}
\end{equation*}
and
\begin{equation*}
 \E\left[\left(M_{k,n} - \Xi_k\right)^2\right] \sim \frac{1}{(1-2\omega^{-k})\Gamma(2\omega^k)}n^{-1}.
\end{equation*}
In particular,
\begin{align*}
\E\left[\Re\left(M_{k,n} - \Xi_k\right)^2\right] &\sim  \frac{1}{2}\frac{1}{2 \lambda_k -1}n^{1-2\lambda_k}, \\ \E\left[\Im\left(M_{k,n} - \Xi_k\right)^2\right] &\sim  \frac{1}{2} \frac{1}{2 \lambda_k -1}n^{1-2\lambda_k},\\
\E\left[\Re\left(M_{k,n} - \Xi_k\right) \Im\left(M_{k,n} - \Xi_k\right)\right] &\sim  \frac{1}{2} \Im\left( \frac{1}{(1-2\omega^{-k})\Gamma(2\omega^k)}\right) n^{-1}.
\end{align*}
\end{lem}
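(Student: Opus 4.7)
The plan is to exploit $L^2$-orthogonality of the martingale increments of $(M_{k,n})_{n\geq 0}$. Since this complex-valued martingale is bounded in $L^2$ with almost sure limit $\Xi_k$, and since the non-conjugated increment product $\E[(M_{k,s}-M_{k,s-1})(M_{k,t}-M_{k,t-1})]$ still vanishes for $s<t$ by conditioning on $\mathcal{F}_{t-1}$, both series identities
\begin{align*}
\E[|M_{k,n} - \Xi_k|^2] = \sum_{s > n} \E[|M_{k,s} - M_{k,s-1}|^2], \qquad \E[(M_{k,n} - \Xi_k)^2] = \sum_{s > n} \E[(M_{k,s} - M_{k,s-1})^2]
\end{align*}
hold. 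It thus suffices to analyze the (co)variances of a single increment and then to sum.

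First I would derive a workable form for the increment. The relation $\E[u_k(R_n) \mid \mathcal{F}_{n-1}] = (1+\omega^k/n) u_k(R_{n-1})$ (which follows from (\ref{bed_erw})) together with the elementary identity $\frac{\Gamma(n+1)}{\Gamma(n+1+\omega^k)}(1+\omega^k/n) = \frac{\Gamma(n)}{\Gamma(n+\omega^k)}$ gives
\begin{align*}
M_{k,n} - M_{k,n-1} = \frac{\Gamma(n+1)}{\Gamma(n+1+\omega^k)} D_n, \qquad D_n := \omega^{(J_n+1)k} - \frac{\omega^k}{n} u_k(R_{n-1}),
\end{align*}
where $J_n$ denotes the type of the ball drawn at step $n$. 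A direct expansion using $\E[\omega^{(J_n+1)k}\mid\mathcal{F}_{n-1}] = (\omega^k/n) u_k(R_{n-1})$ and $\E[\omega^{2(J_n+1)k}\mid\mathcal{F}_{n-1}] = (\omega^{2k}/n) u_{2k}(R_{n-1})$ then yields
\begin{align*}
\E[|D_n|^2 \mid \mathcal{F}_{n-1}] = 1 - \frac{|u_k(R_{n-1})|^2}{n^2}, \qquad \E[D_n^2 \mid \mathcal{F}_{n-1}] = \frac{\omega^{2k}}{n} u_{2k}(R_{n-1}) - \frac{\omega^{2k}}{n^2} u_k(R_{n-1})^2.
\end{align*}

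Next I would take expectations and insert the asymptotics extracted from Lemma \ref{erw}: $\E[|u_k(R_{n-1})|^2] = \mathrm{O}(n^{2\lambda_k})$, $\E[u_{2k}(R_{n-1})] \sim n^{\omega^{2k}}/\Gamma(1+\omega^{2k})$, and, by the general formula at the end of Remark~1 applied with $\ell=k$, $\E[u_k(R_{n-1})^2] \sim n^{2\omega^k}/[(2\omega^k-\omega^{2k})\Gamma(2\omega^k)]$. Combined with the gamma ratios $|\Gamma(n+1)/\Gamma(n+1+\omega^k)|^2 \sim n^{-2\lambda_k}$ and $(\Gamma(n+1)/\Gamma(n+1+\omega^k))^2 \sim n^{-2\omega^k}$, this gives $\E[|M_{k,s}-M_{k,s-1}|^2] \sim s^{-2\lambda_k}$ (the correction $\E[|u_k|^2]/s^2$ being of lower order $s^{2\lambda_k-2}$), while $\E[(M_{k,s}-M_{k,s-1})^2]$ becomes a sum of two terms of orders $s^{\omega^{2k}-2\omega^k-1}$ and $s^{-2}$. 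The trigonometric identity $\cos(4\pi k/m) = 2\lambda_k^2-1$ together with $1/2<\lambda_k<1$ yields $\Re(\omega^{2k}-2\omega^k-1) = 2\lambda_k^2-2\lambda_k-2 < -2$, so the first summand is summably negligible. The elementary estimates $\sum_{s>n} s^{-2\lambda_k}\sim n^{1-2\lambda_k}/(2\lambda_k-1)$ and $\sum_{s>n} s^{-2}\sim n^{-1}$, combined with dividing numerator and denominator by $\omega^{2k}$ in the surviving constant, then deliver the two main asymptotics.

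The statements for the real/imaginary part moments follow from the identities $(\Re Z)^2 = (|Z|^2 + \Re(Z^2))/2$, $(\Im Z)^2 = (|Z|^2 - \Re(Z^2))/2$, and $(\Re Z)(\Im Z) = \Im(Z^2)/2$: since $n^{1-2\lambda_k} \gg n^{-1}$ for $\lambda_k<1$, the $|Z|^2$-term dominates the diagonal entries (yielding the factor $1/2$), whereas the off-diagonal entry is given purely by $\Im(\E[Z^2])/2$. The main obstacle I anticipate is the arithmetic bookkeeping in the expansion of $\E[D_s^2]$, in particular verifying that $\Re(\omega^{2k}-2\omega^k-1) < -2$ so that the $s^{-2}$ contribution is the one that survives in the non-conjugated variance; all other ingredients reduce to standard gamma-ratio expansions and routine sum asymptotics.
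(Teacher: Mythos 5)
Your proposal is correct and follows essentially the same approach as the paper: both express $\E[|M_{k,n}-\Xi_k|^2]$ and $\E[(M_{k,n}-\Xi_k)^2]$ via $L^2$-orthogonality of the martingale increments, compute the conditional increment moments, insert the second-moment asymptotics from Lemma \ref{erw}, and sum. The paper only carries out the first computation in detail, whereas you also work out the non-conjugated case with the necessary check that $\Re(\omega^{2k}-2\omega^k-1)<-2$; that step is correct and matches what the paper tacitly relies on.
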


\begin{proof}
We show the claim for $\E\left[\left|M_{k,n} - \Xi_k\right|^2\right] $
in an exemplary way. Here, we decompose
\begin{align*}
\E\left[\left|M_{k,n} - \Xi_k\right|^2\right] &= \sum_{z=n}^{\infty} \E\left[\left|M_{k,z} - M_{k,z+1}\right|^2\right] \\
&= \sum_{z=n}^{\infty} \left|\frac{\Gamma(z+2)}{\Gamma(z+2+\omega^k)}\right|^2\E\left[\left| u_k(R_{z+1}-R_z)-\frac{\omega^k}{z+1}u_k(R_z)\right|^2\right]\\
&= \sum_{z=n}^{\infty} \left|\frac{\Gamma(z+2)}{\Gamma(z+2+\omega^k)}\right|^2 \left(\E\left[\left| u_k(R_{z+1}-R_z)\right|^2\right] - \frac{1}{(z+1)^2}\E\left[\left| u_k(R_{z})\right|^2\right] \right)\\
&=  \sum_{z=n}^{\infty} \left|\frac{\Gamma(z+2)}{\Gamma(z+2+\omega^k)}\right|^2 \left(1 + \frac{1}{1-2\lambda_k}\frac{1}{(z+1)^2} \left(\frac{\Gamma(z+1+2\lambda_k)}{\Gamma(z+1)\Gamma(2\lambda_k)} - z-1\right)\right)\\
&\sim \sum_{z=n}^{\infty} z^{-2\lambda_k} \sim \frac{1}{2\lambda_k-1} n^{1-2\lambda_k}
\end{align*}
as $n \to \infty$.
\end{proof}

The preceding calculations imply that the covariance matrix of $Z_n$, see Proposition \ref{prop1}, converges as $n \to \infty$. Its limit is given by $M_m$ defined in (\ref{limitmatrix}).

\subsection{Embedding and Recursions}\label{sec:33}
In this section we briefly explain how to derive an almost sure recurrence for the sequence $(R_n)_{n\ge 0}$ which then
extends to the projections. These recursive representations transfer to the martingale limits $\Xi_k$ and thus also to the components of $Z_n$.

We embed the cyclic urn process into a random binary search tree
generated by a sequence $(U_n)_{n \ge 1}$ of i.i.d.~random variables,
where $U:=U_1$ is uniformly distributed on $[0,1]$. The random
binary search tree starts with one external node at time $0$, the
so-called root. At time
$n=1$, the first key $U$ is inserted in this external node, turning
it into an internal node. The occupied node then grows two external nodes attached along a left and right branch. We
successively insert the following keys, where each key traverses the
internal nodes starting at the root, which is occupied by $U$. Whenever the key
traversing is less than the occupying key at a node it moves on to the
left child of that node, otherwise to its right child. The first
external node visited is occupied by the key, turning it into an
internal node with two new external nodes attached. It is easy to see that in each step one of the external nodes is chosen
uniformly at random (and independently of the previous choices) and
replaced by one internal node with two new external nodes attached. See, e.g., Mahmoud \cite{Ma92}, for a detailed description of random binary search trees.

The cyclic urn is embedded into the evolution of the random binary search tree by
labeling its external nodes by the types of the balls. The initial
external node is labeled by type $0$. Whenever an  external node of
type $j\in\{0,\ldots,m-1\}$ is replaced by an internal node then its new left external node is labeled $j$ (corresponding to returning the chosen ball of type $j$ to the urn) and its new right external node is  labeled $(j+1)\mod m$ (corresponding to the addition of a new ball of type $(j+1)\mod m$ to the urn). A related embedding was exploited in \cite[Section 6.3]{knne14}, see also \cite{chmapo15}. Note that the binary search tree starting with one external node labeled $0$ decomposes into its left and right subtree starting with external nodes of types $0$ and $1$, respectively. The size (number of internal nodes) $I_n$ of the left subtree is uniformly distributed on $\{0,\ldots,n-1\}$ and, conditional on $U=u$, $u\in (0,1)$, it is binomial $B_{n-1,u}$ distributed. This implies, with $J_n:=n-1-I_n$, the recurrence
\begin{align}\label{basic_rec}
R_n^{[0]} = R_{I_n}^{[0],(0)} + R_{J_n}^{[1],(1)}= R_{I_n}^{[0],(0)} +
  {\cal A}^t R_{J_n}^{[0],(1)}, \quad n \geq 1,
\end{align}
where the sequences $(R_{n}^{[0],(0)})_{n\ge 0}$ and $(
R_{n}^{[1],(1)})_{n\ge 0}$ denote the composition vectors of the
cyclic urns given by the evolutions of the left and right subtrees of
the root of the binary search tree (upper indices $[0]$ and $[1]$
denoting the initial type, upper indices $(0)$ and $(1)$  denoting
left and right subtree). They are independent of
$I_n$. We have set $(R_n^{[0],(1)})_{n \ge 0} := ({\cal A}
R_n^{[1],(1)})_{n \ge 0}$, and note that due to identity (\ref{shift}),
  $(R_n^{[0],(1)})_{n \ge 0}$ is a cyclic urn process started with one
  ball of type $0$ at time $0$. Now, applying the  transformation and
  scaling (\ref{norm_rn_mkn}) which turn $R_n$ into  $M_{k,n}$ to the
  left and right hand side of (\ref{basic_rec}), letting $n\to\infty$
  and using the convergence in (\ref{mg_conv}) yields the following almost sure recursive equation for the  $\Xi_k$:
\begin{prop}
For all $k\ge 1$ with $\lambda_k>\frac{1}{2}$ there exist random variables $\Xi_k^{(0)}$, $\Xi_k^{(1)}$ such that
\begin{align}\label{rec_xik}
\Xi_k = U^{\omega^k} \Xi_k^{(0)} + \omega^k (1-U)^{\omega^k} \Xi_k^{(1)} + g_k(U),
\end{align}
$U, \Xi_k^{(0)}$, $\Xi_k^{(1)}$ are independent, $U$ is uniformly distributed on $[0,1]$ and $\Xi_k^{(0)}$ and $\Xi_k^{(1)}$ have the same distribution as $\Xi_k$.
Here,
\begin{equation*}
 g_k(u):=\frac{1}{\Gamma(1+\omega^k)}\left(u^{\omega^k} + \omega^k(1-u)^{\omega^k} -1 \right).
\end{equation*}

\end{prop}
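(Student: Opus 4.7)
The plan is to apply the linear functional $u_k$ to both sides of (\ref{basic_rec}), center by the mean, normalize by $\Gamma(n+1)/\Gamma(n+1+\omega^k)$ so as to produce the martingale $M_{k,n}$, and then pass to the almost sure limit as $n\to\infty$ using (\ref{mg_conv}). The key algebraic ingredient is the eigenvalue identity
\[
u_k(\mathcal{A}^t x) \;=\; \sum_{t=0}^{m-1} \omega^{kt} x_{(t-1)\bmod m} \;=\; \omega^k\, u_k(x),
\]
which turns (\ref{basic_rec}) into $u_k(R_n^{[0]}) = u_k(R_{I_n}^{[0],(0)}) + \omega^k u_k(R_{J_n}^{[0],(1)})$. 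Writing $\phi_k(n):=\mathbb{E}[u_k(R_n^{[0]})]$ and $h(n):=\Gamma(n+1)/\Gamma(n+1+\omega^k)$, I would subtract $\phi_k(n)$ from both sides, add and subtract the conditional mean $\phi_k(I_n)+\omega^k\phi_k(J_n)$, and multiply through by $h(n)$. This produces the exact identity, valid on the event $\{I_n=i\}$,
\[
M_{k,n}^{[0]} \;=\; \frac{h(n)}{h(i)}\, M_{k,i}^{[0],(0)} \;+\; \omega^k\,\frac{h(n)}{h(n-1-i)}\, M_{k,n-1-i}^{[0],(1)} \;+\; h(n)\bigl(\phi_k(i)+\omega^k\phi_k(n-1-i)-\phi_k(n)\bigr).
\]

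The second step is to let $n\to\infty$ in this pointwise identity. Conditionally on $U$ the random variable $I_n$ is $B_{n-1,U}$-distributed, so the strong law of large numbers gives $I_n/n\to U$ and $J_n/n\to 1-U$ almost surely. Stirling's formula yields $h(n)\sim n^{-\omega^k}$ and $\phi_k(n)\sim n^{\omega^k}/\Gamma(1+\omega^k)$, hence
\[
\frac{h(n)}{h(I_n)}\;\to\; U^{\omega^k},\qquad \frac{h(n)}{h(J_n)}\;\to\;(1-U)^{\omega^k}
\]
a.s., and the deterministic defect converges a.s.\ to
\[
\frac{1}{\Gamma(1+\omega^k)}\bigl(U^{\omega^k}+\omega^k(1-U)^{\omega^k}-1\bigr)\;=\;g_k(U).
\]
Since $I_n,J_n\to\infty$ almost surely, applying (\ref{mg_conv}) to the left and right subtree processes gives $M_{k,I_n}^{[0],(0)}\to\Xi_k^{(0)}$ and $M_{k,J_n}^{[0],(1)}\to\Xi_k^{(1)}$ almost surely, where $\Xi_k^{(0)},\Xi_k^{(1)}$ denote the martingale limits attached to those processes. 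Combined with $M_{k,n}^{[0]}\to\Xi_k$ this proves (\ref{rec_xik}).

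The independence assertion is an immediate consequence of the binary-search-tree embedding: conditionally on $U$, the left and right subtree processes are driven by disjoint, independent families of i.i.d.\ uniform keys and are each distributed as the original cyclic urn started with one ball of type $0$. Hence $\Xi_k^{(0)}$ and $\Xi_k^{(1)}$ are conditionally independent copies of $\Xi_k$ given $U$, and since their common law is free of $U$, the triple $(U,\Xi_k^{(0)},\Xi_k^{(1)})$ is mutually independent with the prescribed marginals. The only real technical point, which I expect to be the main obstacle, is the joint almost sure convergence of the three random factors in the recursion: the coefficients $h(n)/h(I_n)$ are uniformly bounded only on the events $\{\varepsilon n\le I_n\le(1-\varepsilon)n\}$, so one needs the $\mathrm{L}_p$-version of (\ref{mg_conv}) together with a Doob-type bound on $|M_{k,i}^{[0],(0)}|$ to control the product on the rare events where $I_n/n$ is near $0$ or $1$; this is routine but it is what prevents a direct appeal to continuity of multiplication.
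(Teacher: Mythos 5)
Your proposal matches the paper's proof: the paper establishes the Proposition by applying $u_k$ to the subtree decomposition (\ref{basic_rec}), using $u_k(\mathcal{A}^t x)=\omega^k u_k(x)$, rescaling to the martingale $M_{k,n}$, and passing to the almost sure limit via $I_n/n\to U$ and the martingale convergence (\ref{mg_conv}), exactly as you do. Your closing concern about needing $L_p$ estimates and Doob bounds on the rare events $\{I_n/n \text{ near } 0 \text{ or } 1\}$ is unnecessary for this almost sure statement: since $I_n\to\infty$ and $I_n/n\to U\in(0,1)$ hold almost surely, and $M_{k,\ell}^{[0],(j)}\to\Xi_k^{(j)}$ almost surely with $\Xi_k^{(j)}$ finite, the three factors converge pointwise on a set of full probability and so does their product, with no uniform control required.
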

Here and subsequently, we make no use of the fact that the martingale limits $\Xi_k$ can also be written explicitly as deterministic functions of the limit of the random binary search tree when interpreting the evolution of the random binary search tree as a transient Markov chain and its limit as a random variable in the Markov chain's Doob-Martin boundary, see \cite{evgrwa12,gr14}. Following this path the $\Xi_k$ become a deterministic function of $(U_n)_{n\ge 1}$  and from this representation the self-similarity relation (\ref{rec_xik}) can be read off as well. See  \cite{BiFi12} for a related explicit construction.

Returning to $Z_n$, we see that
\begin{align}\label{rec_zn}
Z_n = \sigma_{I_n}^{-1} \sigma_n Z_{I_n}^{(0)} +
  \sigma_{J_n}^{-1}\sigma_{n}\mathcal{D} Z_{J_n}^{(1)} + \sigma_n
  F_n, \quad n \geq 1,
\end{align}
where $\sigma_0:=\sigma_1:=\mathrm{Id}_{m-1}$ and $\sigma_k:=\frac{1}{\sqrt{k}}\mathrm{diag}\left(1, \ldots, 1,
\frac{1}{\sqrt{\log k}}, \frac{1}{\sqrt{\log k}}, 1, \ldots,
1\right)$ for $k\ge 2$, where the additional factor of
$\sqrt{\log k}$ is needed for the eigenspace $m/6$ (recall that  $\lambda_{m/6}=\frac{1}{2}$),
the ${(m-1) \times (m-1)}$ matrix $\mathcal{D}$ is composed of
rotation matrices
\begin{eqnarray*}
\mathcal{D}=\left(
  \begin{array}{ccccccc}
        & \cos\left( \frac{2\pi}{m}\right)  & -\sin\left( \frac{2\pi}{m}\right) & &&&\\
         &  \sin\left( \frac{2\pi}{m}\right)& \cos\left( \frac{2\pi}{m}\right) & &&&\\
     & & &  \ddots   &&&\\
&&&&\cos\left( \frac{2\pi (m/2-1)}{m}\right)&-\sin\left( \frac{2\pi (m/2-1)}{m}\right)&\\
&&&&\sin\left( \frac{2\pi (m/2-1)}{m}\right)&\cos\left( \frac{2\pi (m/2-1)}{m}\right)&\\
 & & &  & &&-1\\
  \end{array}\right)
\end{eqnarray*}
and the ``error term'' $F_n$ is made up of three components: Setting
\begin{align}
G_{k,n}(\ell) &:= \frac{\Gamma(\ell + 1 + \omega^k)}{\Gamma(\ell + 1) \Gamma(1+\omega^k)} + \omega^k \frac{\Gamma((n-1-\ell)+1+\omega^k)}{\Gamma((n-1-\ell)+1) \Gamma(1+\omega^k)} - \frac{\Gamma(n+1+\omega^k)}{\Gamma(n+1)\Gamma(1+\omega^k)}
\end{align}
for $\ell \in \{0, \ldots, n-1\}$, we have $F_n = F_n^{(1)} +
F_n^{(2)}$, where
\begin{eqnarray*}
F_n^{(1)}& := \left(
  \begin{array}{c}
\Re\left(G_{1,n}(I_n)\right) \\
\Im\left(G_{1,n}(I_n) \right) \\
      \vdots \\
\Re\left(G_{r,n}(I_n) \right) \\
\Im\left(G_{r,n}(I_n)\right) \\
\Re\left(G_{r+1,n}(I_n)\right) \\
\Im\left(G_{r+1,n}(I_n) \right) \\
\vdots \\
0
  \end{array}\right) - \left(
\begin{array}{c}
\Re\left(n^{\omega} g_1(U)\right) \\
\Im\left(n^{\omega} g_1(U)\right) \\
      \vdots \\
\Re\left(n^{\omega^r} g_{r} (U)\right) \\
\Im\left(n^{\omega^r} g_{r} (U)\right) \\
0\vspace{-2mm}\\
\vdots \\
0
\end{array} \right),
\end{eqnarray*}
and $F_n^{(2)}$ is given by the sum
\begin{eqnarray*}
\left(
  \begin{array}{c}
\Re\left(\left( I_n^{\omega} -
    (nU)^{\omega}\right)
    \Xi_1^{(0)} + \left( J_n^{\omega}- (n(1-U))^{\omega}\right) \omega \Xi_1^{(1)}\right)\\
\Im\left(\left( I_n^{\omega} -
    (nU)^{\omega}\right)
    \Xi_1^{(0)} +\left( J_n^{\omega} - (n(1-U))^{\omega}\right) \omega\Xi_1^{(1)}\right) \\
\vdots \\
\Re\left(\left( I_n^{\omega^r} -
    (nU)^{\omega^{r}}\right)
    \Xi_{r}^{(0)}+ \left( J_n^{\omega^r}- (n(1-U))^{\omega^{r}}\right) \omega^{r}\Xi_{r}^{(1)}\right)  \\
\Im\left(\left( I_n^{\omega^r}-
    (nU)^{\omega^{r}}\right)
    \Xi_{r}^{(0)}+ \left( J_n^{\omega^r}- (n(1-U))^{\omega^{r}}\right) \omega^{r} \Xi_{r}^{(1)}\right)\\
0\vspace{-2mm}\\
\vdots \\
0
  \end{array}\right) .
\end{eqnarray*}
Note that $\mathcal{D} M_m \mathcal{D}^t = M_m$.

\subsection{The Zolotarev metric}\label{zolo_sec}

In the last subsection, we prepared a proof of Proposition
\ref{prop1} that is based on the contraction method. To be more
precise, weak convergence in Proposition \ref{prop1} is shown by (the stronger) convergence within the Zolotarev metric.  The Zolotarev metric has been
studied systematically in the context of distributional recurrences in
\cite{neru04}. We only give the definitions of the relevant quantities
and properties here.

For $x \in \R^{d}$, we denote by $\| x\|$ the standard Euclidean norm of
$x$, and for $B \in \R^{d \times d}$, $\|B\|_{\text{op}}$ denotes the
corresponding operator norm. For random variables $X$ and $p \geq 1$,
we denote by $\|X\|_p$ the L$_p$-norm of $X$.

For two $\R^{d}$ valued random variables $X$ and $Y$ we set
\begin{align*}
\zeta_3(X,Y) := \sup_{f \in \mathcal{F}_3}|\E[f(X)-f(Y)]|,
\end{align*}
where
\begin{align*}
\mathcal{F}_3 := \left\{ f \in C^2(\R^{d}, \R): \|D^2f(x) - D^2f(y)\|_{\text{op}} \leq \|x-y\| , \hspace{0.2 cm} x, y \in \R^{d} \right\}.
\end{align*}
We call a pair $(X,Y)$ $\zeta_3$-compatible if the expectation and the covariance matrix of $X$ and $Y$ coincide and if both $\|X\|_{3}, \|Y\|_{3} < \infty$. This implies that $\zeta_3(X,Y) < \infty$. A basic property is that $\zeta_3$ is $(3,+)$-ideal, i.e.,
\begin{eqnarray*}
\zeta_3(X+Z,Y+Z)\le\zeta_3(X,Y), \quad  \zeta_3(cX,cY) = c^3 \zeta_3(X,Y)
\end{eqnarray*}
for random vectors $X,Y,Z$, where $Z$ is independent of $X,Y$ and
$c>0$. For a linear transformation $A$ of $\R^{d}$, we have
\begin{eqnarray}\label{op_raus}
\zeta_3(AX,AY) \le \|A\|_\mathrm{op}^3 \zeta_3(X,Y).
\end{eqnarray}
The following lemma will be used in the proof of Proposition \ref{prop1} and can be proved similarly to Lemma $2.1$ in \cite{ne14}.
\begin{lem}\label{zolosum}
Let $V_1, V_2, W_1, W_2$ be random variables in $\R^{d}$ such that
$(V_1, V_2)$ and $(V_1 + W_1, V_2+W_2)$ are $\zeta_3-$compatible. Then
we have
\begin{align*}
\zeta_3(V_1+W_1, V_2+W_2) \leq \zeta_3(V_1,V_2) + \sum_{i=1}^2 \left(\|V_i\|_{3}^2\|W_i\|_{3} + \frac{\|V_i\|_{3}\|W_i\|_{3}^2}{2} + \frac{\|W_i\|_{3}^3}{2}\right).
\end{align*}
\end{lem}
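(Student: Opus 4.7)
The plan is to use a second-order Taylor expansion combined with the standard trick of killing off low-degree polynomial terms using the $\zeta_3$-compatibility. Observe that $\mathcal{F}_3$ is invariant under adding a polynomial of degree at most two (only the Lipschitz constant of $D^2 f$ enters the definition), and the expectation of such a polynomial is the same under any pair of $\zeta_3$-compatible variables. Given $f \in \mathcal{F}_3$, I would therefore subtract $f(0) + Df(0)x + \tfrac{1}{2} x^T D^2 f(0) x$ and assume without loss of generality $f(0) = 0$, $Df(0) = 0$, $D^2 f(0) = 0$. The $1$-Lipschitz bound on $D^2 f$, integrated along straight lines from the origin, then yields $\|D^2 f(x)\|_{\mathrm{op}} \leq \|x\|$, $\|Df(x)\| \leq \tfrac{1}{2}\|x\|^2$, and $|f(x)| \leq \tfrac{1}{6}\|x\|^3$.

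Next, I would decompose
\begin{align*}
\E[f(V_1 + W_1) - f(V_2 + W_2)] &= \E[f(V_1) - f(V_2)] \\
&\quad + \bigl(\E[f(V_1+W_1) - f(V_1)] - \E[f(V_2+W_2) - f(V_2)]\bigr).
\end{align*}
The first piece is bounded by $\zeta_3(V_1, V_2)$ directly. For each of the two remaining pieces, a second-order Taylor expansion around $V_i$ gives
\begin{align*}
f(V_i + W_i) - f(V_i) = Df(V_i) W_i + \tfrac{1}{2} W_i^T D^2 f(V_i) W_i + R_i,
\end{align*}
where the integral form of the Taylor remainder together with the $1$-Lipschitz property of $D^2 f$ produces $|R_i| \leq \tfrac{1}{6}\|W_i\|^3$. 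The growth estimates from the first paragraph combined with Hölder's inequality (with exponents summing to $3$) then control the three contributions termwise: $|\E[Df(V_i) W_i]| \leq \tfrac{1}{2}\|V_i\|_3^2 \|W_i\|_3$, $\tfrac{1}{2}|\E[W_i^T D^2 f(V_i) W_i]| \leq \tfrac{1}{2}\|V_i\|_3 \|W_i\|_3^2$, and $|\E[R_i]| \leq \tfrac{1}{6}\|W_i\|_3^3$. Summing over $i = 1, 2$ and taking the supremum over $f \in \mathcal{F}_3$ yields the stated inequality (with a little slack, since the displayed constants $1$, $\tfrac{1}{2}$, $\tfrac{1}{2}$ majorize the sharper $\tfrac{1}{2}$, $\tfrac{1}{2}$, $\tfrac{1}{6}$ one obtains this way).

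The argument is essentially routine; the main (mild) obstacle is to place the two compatibility hypotheses in the right spots. The compatibility of $(V_1, V_2)$ is precisely what makes the polynomial normalization of $f$ leave $\E[f(V_1) - f(V_2)]$ invariant, while the compatibility of $(V_1+W_1, V_2+W_2)$ is needed so that the same normalization leaves $\E[f(V_1+W_1) - f(V_2+W_2)]$ invariant. Both compatibilities together with finiteness of the relevant third moments also guarantee that the two $\zeta_3$ quantities involved are finite, so the inequality is genuinely informative.
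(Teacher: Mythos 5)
Your proof is correct and takes the same route the paper intends by its citation of~\cite[Lemma~2.1]{ne14}: normalize $f$ by subtracting its second-order Taylor polynomial at~$0$ (using both compatibility hypotheses to ensure the normalization changes neither expectation difference), then Taylor-expand the two perturbation pieces and bound the resulting terms by H\"older's inequality. In fact your constants $\tfrac12,\tfrac12,\tfrac16$ are slightly sharper than the stated $1,\tfrac12,\tfrac12$, so the claimed bound follows a fortiori.
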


In order to work with the Zolotarev metric later, it is necessary to
adjust the covariance matrix of $Z_n$. I.e., we need to work with a
sequence of random vectors that is sufficiently close to $(Z_n)_{n
  \geq 0}$ and
has fixed covariance matrix $M_m$ to guarantee the finiteness of the
corresponding Zolotarev distances $\zeta_3$.

As noted in section \ref{sec:31}, the covariance matrices
$(\Cov(Z_n))_{n \ge 0}$ converge componentwise
to $M_m$, and $M_m$ is invertible.
Thus, there exists $n_0 \in \N$ such that for all $n \geq
n_0$, $\Cov(Z_n)$ is invertible.
Defining
\begin{align}\label{def_Si}
\Sigma_n := \ind_{\{n < n_0\}} \mathrm{Id}_m + \ind_{\{n \geq n_0\}} M_m^{1/2} \Cov(Z_n)^{-1/2},
\end{align}
$\Sigma_n$ is invertible for all $n \geq 0$ and we see that $\Sigma_n Z_n$ has covariance matrix $M_m$ for all $n \geq n_0$. We now set
\begin{align}\label{RekN}
N_n := \Sigma_n Z_n = A_n^{(0)} N_{I_n}^{(0)} + A_n^{(1)} N_{J_n}^{(1)} + b_n,
\end{align}
where the right hand side is a recursive decomposition of $N_n$ with coefficients
\begin{align*}
A_n^{(0)}:= \Sigma_n \sigma_n\sigma_{I_n}^{-1}\Sigma_{I_n}^{-1},
  \hspace{0.2 cm} A_n^{(1)}:= \Sigma_n \sigma_{n} \sigma_{J_n}^{-1} \mathcal{D}\Sigma_{J_n}^{-1}, \hspace{0.2 cm} b_n:= \Sigma_n \sigma_n\left( F_n^{(1)} + F_n^{(2)} \right).
\end{align*}

\subsection{Preparatory Lemmata}\label{sec_technical}

In this section we collect some technical lemmata needed in the proof of Proposition \ref{prop1} in the next section. We first look at the asymptotics of the coefficients arising in recursion (\ref{RekN}).

\begin{lem} \label{Acoeff}
For all $1 \leq p < \infty$, as $n \to \infty$,
\begin{align*}
\left\| A_n^{(0)} - \sqrt{U} \cdot \mathrm{Id}_{m-1}\right\|_{p} \to 0
  \hspace{0.6 cm} \text{and} \hspace{0.6 cm} \left\| A_n^{(1)} -
  \sqrt{1-U} \cdot \mathcal{D} \right\|_{p} \to 0.
\end{align*}
\end{lem}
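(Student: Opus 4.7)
The plan is to reduce the claim to the explicit diagonal structure of $\sigma_k$ and $\Sigma_k$ together with standard asymptotic properties of the split sizes $I_n, J_n$. From the embedding into the random binary search tree, conditional on $U$ the variable $I_n$ is $B(n-1,U)$-distributed, so the strong law of large numbers gives $I_n/n\to U$ almost surely, and since $0\le I_n/n\le 1$ bounded convergence also yields $L_p$-convergence for every $p\ge 1$; analogously $J_n/n\to 1-U$. Moreover, by Section~\ref{sec:31} we have $\mathrm{Cov}(Z_n)\to M_m$, and since $M_m$ is invertible, the definition $(\ref{def_Si})$ gives $\Sigma_n\to\mathrm{Id}_{m-1}$ in operator norm, with $\|\Sigma_n\|_{\mathrm{op}}$ and $\|\Sigma_n^{-1}\|_{\mathrm{op}}$ bounded uniformly in $n$.

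The main computation is then the identity
\begin{align*}
\sigma_n\sigma_{I_n}^{-1}=\sqrt{\tfrac{I_n}{n}}\,\mathrm{diag}\!\left(1,\ldots,1,\sqrt{\tfrac{\log I_n}{\log n}},\sqrt{\tfrac{\log I_n}{\log n}},1,\ldots,1\right)
\end{align*}
on $\{I_n\ge 2\}$, where the two $\sqrt{\log I_n/\log n}$ entries appear only in the coordinates corresponding to $\lambda_{m/6}=\lambda_{5m/6}=1/2$ when $6\mid m$ and are absent otherwise. Since $I_n\le n-1$ and $n\ge 2$, each diagonal entry lies in $[0,1]$, so $\|\sigma_n\sigma_{I_n}^{-1}\|_{\mathrm{op}}\le 1$ there. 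On the negligible event $\{I_n\in\{0,1\}\}$ the boundary convention $\sigma_0=\sigma_1=\mathrm{Id}$ gives $\sigma_n\sigma_{I_n}^{-1}=\sigma_n$, whose operator norm is at most $1/\sqrt{n}$. Combined with the uniform bounds on $\Sigma_n$ and $\Sigma_{I_n}^{-1}$, this shows $\|A_n^{(0)}\|_{\mathrm{op}}\le C$ deterministically, for a constant $C$ independent of $n$.

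On the probability-one event $\{U>0\}$ we have $I_n\to\infty$ a.s., hence $\sqrt{I_n/n}\to\sqrt{U}$, $\sqrt{\log I_n/\log n}\to 1$, and $\Sigma_{I_n}^{-1}\to\mathrm{Id}_{m-1}$ a.s. Together with $\Sigma_n\to\mathrm{Id}_{m-1}$, this yields $A_n^{(0)}\to\sqrt{U}\,\mathrm{Id}_{m-1}$ almost surely. Since $\|A_n^{(0)}-\sqrt{U}\,\mathrm{Id}_{m-1}\|_{\mathrm{op}}\le C+1$ deterministically, dominated convergence upgrades the a.s.~convergence to $L_p$-convergence for every finite $p$. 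The same argument with $J_n$ in place of $I_n$ and the rotation factor $\mathcal{D}$ inherited from the definition of $A_n^{(1)}$ yields the second claim.

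The only mildly subtle point, which I would verify carefully, is the interaction between the $\log$-factors and the low-probability event $\{I_n\le 1\}$: there the natural formula above is meaningless and must be replaced by the boundary convention for $\sigma_0,\sigma_1$. Since this event has probability $O(1/n)$ and the operator norm of $A_n^{(0)}$ remains bounded on it, no extra truncation argument is needed beyond the uniform bound already established.
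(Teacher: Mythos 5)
Your proof is correct and takes essentially the same route as the paper's: establish almost sure convergence of $A_n^{(0)}$ and $A_n^{(1)}$ (via $I_n/n\to U$, $\log I_n/\log n\to 1$, and $\Sigma_n,\Sigma_{I_n}^{-1}\to\mathrm{Id}_{m-1}$) and then upgrade to $L_p$-convergence by dominated convergence. The paper's proof is terser; you spell out the explicit form of $\sigma_n\sigma_{I_n}^{-1}$, the deterministic operator-norm bound needed to invoke dominated convergence, and the boundary cases $I_n\in\{0,1\}$, all of which are implicit in the paper.
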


\begin{proof}
We first check almost sure convergence. Both $\sqrt{I_n/n}, \sqrt{(I_n\log{I_n})/(n\log{n})} \to \sqrt{U}$ and
$\sqrt{J_n/n}, \sqrt{(J_n\log{J_n})/(n\log{n})} \to \sqrt{1-U}$
a.s. as $n \to \infty$. Also, because $I_n \to
\infty$ a.s. as $n \to \infty$, both $\Sigma_n, \Sigma_{I_n}^{-1} \to \mathrm{Id}_{m-1}
$. The claim now follows for all $1 \leq p < \infty$ by an
application of the dominated convergence theorem.
\end{proof}

\begin{lem} \label{GamU}
Let $k \in \{1, \ldots, r\}$. As $n \to \infty$,
\begin{align*}
\left\|\left( \frac{I_n}{n}\right)^{\omega^k} - U^{\omega^k} \right\|_{3} = \bo\left( n^{-\lambda_k/2}\right).
\end{align*}
\end{lem}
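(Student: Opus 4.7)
\medskip

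\noindent\textbf{Proof plan.} The plan is to decouple the complex exponent from the randomness by establishing a pointwise H\"older estimate for $x \mapsto x^{\omega^k}$ and then combining it with standard $\mathrm{L}_p$ bounds for the binomial fluctuations of $I_n$ around $nU$. Since $\lambda_k \in (1/2, 1)$ for $1 \le k \le r$, the power function behaves well, and the only subtle point is control near the origin $x = 0$, where the complex derivative $\omega^k t^{\omega^k - 1}$ is unbounded.

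First I would prove the deterministic inequality
\begin{equation*}
|x^{\omega^k} - y^{\omega^k}| \le \frac{1}{\lambda_k}|x-y|^{\lambda_k}, \qquad x,y \ge 0.
\end{equation*}
For $0 \le y \le x$, writing $x^{\omega^k} - y^{\omega^k} = \omega^k\int_y^x t^{\omega^k-1}\,dt$ and using $|t^{\omega^k - 1}| = t^{\lambda_k - 1}$ yields $|x^{\omega^k} - y^{\omega^k}| \le \frac{1}{\lambda_k}(x^{\lambda_k} - y^{\lambda_k})$. Since $t \mapsto t^{\lambda_k}$ is concave on $[0,\infty)$ and vanishes at $0$, it is subadditive, giving $x^{\lambda_k} - y^{\lambda_k} \le (x-y)^{\lambda_k}$. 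This furnishes a global $\lambda_k$-H\"older bound that avoids any case distinction near $0$.

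Next I would verify that $\|I_n/n - U\|_p = \mathrm{O}(n^{-1/2})$ for every fixed $p \ge 1$. Conditional on $U$, $I_n$ is $\mathrm{Binomial}(n-1,U)$, so applying (for instance) the Marcinkiewicz--Zygmund inequality to the centered sum of $n-1$ i.i.d.\ Bernoulli$(U)$ variables yields $\mathbb{E}[|I_n - (n-1)U|^p \mid U] \le C_p n^{p/2}$ uniformly in $U \in [0,1]$. Taking expectations and adding the deterministic $\mathrm{O}(1/n)$ gap $U/n$ gives the claim.

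Combining these two inputs, the H\"older estimate yields
\begin{equation*}
\left\|\left(\tfrac{I_n}{n}\right)^{\omega^k} - U^{\omega^k}\right\|_3^3 \le \frac{1}{\lambda_k^3}\,\mathbb{E}\!\left[\left|\tfrac{I_n}{n} - U\right|^{3\lambda_k}\right] \le \frac{1}{\lambda_k^3}\,\left\|\tfrac{I_n}{n} - U\right\|_{3\lambda_k}^{3\lambda_k} = \mathrm{O}\!\left(n^{-3\lambda_k/2}\right),
\end{equation*}
where in the last step I use the moment bound of the preceding paragraph with $p = 3\lambda_k < 3$. Taking cube roots gives the asserted $\mathrm{O}(n^{-\lambda_k/2})$. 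No step is particularly delicate; the only conceptual point worth flagging is that the H\"older exponent $\lambda_k$ (rather than $1$) is the correct regularity to use, which is why the final rate degrades from $n^{-1/2}$ (what one would naively expect from Taylor expansion) to $n^{-\lambda_k/2}$.
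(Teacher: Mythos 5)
Your proof is correct, and it takes a genuinely different and noticeably more economical route than the paper. The paper first isolates the real exponent by writing $x^{\omega^k}-y^{\omega^k}$ as a difference of modulus terms $x^{\lambda_k}-y^{\lambda_k}$ (controlled by subadditivity of $t\mapsto t^{\lambda_k}$) plus a phase term proportional to $x^{\lambda_k}\log(x/y)$; the second term is then bounded by conditioning on $U$, splitting the integration over $u$ into a good event $\mathrm{E}_u=\{B_{n-1,u}\ge un/e\}$ and its complement, applying Chernoff on $\mathrm{E}_u^c$, and applying the mean value theorem to $h_1((1+y)^{\lambda_k})$ on $\mathrm{E}_u$. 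Your approach replaces this entire case analysis with the single pointwise estimate
\begin{equation*}
\left|x^{\omega^k}-y^{\omega^k}\right|=\left|\omega^k\int_y^x t^{\omega^k-1}\,dt\right|\le\int_y^x t^{\lambda_k-1}\,dt=\frac{x^{\lambda_k}-y^{\lambda_k}}{\lambda_k}\le\frac{(x-y)^{\lambda_k}}{\lambda_k},\qquad 0\le y\le x,
\end{equation*}
which uses $|\omega^k|=1$, $|t^{\omega^k-1}|=t^{\lambda_k-1}$, convergence of the improper integral at $0$ (since $\lambda_k>0$), and the subadditivity of $t\mapsto t^{\lambda_k}$. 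This collapses the modulus and phase fluctuations into one $\lambda_k$-H\"older bound, so the only probabilistic input needed is $\|I_n/n-U\|_p=\mathrm{O}(n^{-1/2})$, which is standard (and indeed the same Marcinkiewicz--Zygmund bound the paper already uses for the modulus part). Both arguments explain the $n^{-\lambda_k/2}$ rate the same way — it is the H\"older exponent $\lambda_k$ of the complex power map that degrades the naive $n^{-1/2}$ rate — but your path integral observation shows that this exponent already captures the phase term, making the paper's Chernoff and mean value theorem machinery unnecessary. One small point worth stating explicitly: for $k\in\{1,\ldots,r\}$ one has $\lambda_k\in(1/2,1)$, so $3\lambda_k\in(3/2,3)$ and $\|\cdot\|_{3\lambda_k}$ is a bona fide $\mathrm{L}_p$-norm; this is what licenses the final step $\|I_n/n-U\|_{3\lambda_k}^{3\lambda_k}=\mathrm{O}(n^{-3\lambda_k/2})$.
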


\begin{proof}
The triangle inequality implies
\begin{align}\label{tri_est}
\left\|\left( \frac{I_n}{n}\right)^{\omega^k} - U^{\omega^k}\right\|_{3} &\leq \left\|\left( \frac{I_n}{n}\right)^{\lambda_k} - U^{\lambda_k}\right\|_{3} + \mu_k \left\|\left( \frac{I_n}{n}\right)^{\lambda_k} \log\left(\frac{I_n}{nU} \right)\right\|_{3}.
\end{align}
We start by considering the first summand in the latter display. Denoting by $B_{n-1,U}$ a mixed binomial distribution with parameters $n-1$ and $U$, we see that
\begin{align*}
\left\|\left( \frac{I_n}{n}\right)^{\lambda_k} - U^{\lambda_k}\right\|_{3} & \leq \left\|\left( \frac{I_n}{n}\right) - U\right\|^{\lambda_k}_{3} = \E\left[ \E\left[\left|\frac{B_{n-1,U}}{n} -U \right|^3 | U \right]\right]^{\frac{\lambda_k}{3}}
\end{align*}
 since  $I_n$, conditional on $U=u$, has the $B_{n-1,u}$ distribution. Employing the Marcinkiewickz--Zygmund inequality, there exists a constant $C$ independent of $u \in [0,1]$ such that
\begin{align*}
\E\left[ \left| B_{n-1,u} - (n-1)u\right|^3\right] & \leq C n^{\frac{3}{2}}.
\end{align*}
This implies $\left\|\left( \frac{I_n}{n}\right)^{\lambda_k} - U^{\lambda_k}\right\|_{3} = \bo\left( n^{-\lambda_k/2}\right)$. For the analysis of the second summand in (\ref{tri_est}), we also condition on $U$ and write
\begin{align*}
\left\|\left( \frac{I_n}{n}\right)^{\lambda_k} \log\left(\frac{I_n}{nU} \right)\right\|^3_{3} &= \int_0^1\E\left[ \left| \left( \frac{B_{n-1,u}}{n}\right)^{\lambda_k} \log\left(\frac{B_{n-1,u}}{nu} \right)\right|^3\right]du
\end{align*}
We divide the integral into two parts. For this purpose, define E$_u:=\{B_{n-1,u}\geq \frac{un}{e}\}$. Chernoff's inequality implies that for $0 \leq t < u(n-1)$
\begin{align*}
\Prob\left(B_{n-1,u} - u(n-1) < -t \right) & \leq \exp(-t^2/(2u(n-1))),
\end{align*}
so the complement E$_u^c$ of E$_u$ satisfies $\Prob($E$_u^c) \leq
\exp(-C_0 u n)$ for some constant $C_0 > 0$. We further denote by $h_{\lambda_k}: [0,\infty) \to \R$ the function $h_{\lambda_k}(x):=x^{\lambda_k}\log(x)$ (convention: $0 \cdot \log 0 := 0$). Then $\sup_{x \in [0,1]}|h_{\lambda_k}(x)| = \frac{1}{\lambda_k e} < \frac{2}{e} < 1$. We can now bound the expectation on E$_u^c$ in the following way:
\begin{align*}
\E\left[ \left| \left( \frac{B_{n-1,u}}{n}\right)^{\lambda_k} \log\left(\frac{B_{n-1,u}}{nu} \right)\right|^3 \ind_{\text{E}_u^c}\right] &= \int_{\text{E}^c_u}u^{3 \lambda_k}\left| h_{\lambda_k}\left(\frac{B_{n-1,u}}{un}\right)\right|^3 d\Prob \leq u^{3 \lambda_k}  \exp \left(-C_0 un\right).
\end{align*}
On E$_u$, we apply the mean value theorem to
$h_1\left((1+y)^{\lambda_k}\right) - h_1\left(1^{\lambda_k}\right))$ with $y=\frac{B_{n-1,u}-nu}{nu}$. Note
that $(\min\{1, 1+y \}, \max\{1, 1+y\}) \subset
[\frac{1}{e},\frac{1}{u}]$ on E$_u$ and that $|h_1'|$ is nonnegative
and increasing on this interval. Thus,
\begin{align*}
\E &\left[ \left| \left( \frac{B_{n-1,u}}{n}\right)^{\lambda_k}
  \log\left(\frac{B_{n-1,u}}{nu} \right)\right|^3
  \ind_{\text{E}_u}\right] \\
&= \int_{\text{E}_u}\left(\frac{1}{\lambda_k}\right)^3 u^{3 \lambda_k} \left|
                             h_1\left(\left(1+ \frac{B_{n-1,u}-nu}{nu} \right)^{\lambda_k}\right)
                             - h_1\left(1^{\lambda_k}\right)\right|^3 d \Prob \\
&\leq \int_{\text{E}_u}\left(\frac{1}{\lambda_k}\right)^3 u^{3
  \lambda_k} \left(\sup_{v \in [\frac{1}{e},\frac{1}{u}]}|h_1'(v)| \right)^3\left|
  \left(1+ \frac{B_{n-1,u}-nu}{nu} \right)^{\lambda_k} -
  1^{\lambda_k}\right|^3 d \Prob\\
&\leq \int_{\text{E}_u}\left(\frac{1}{\lambda_k}\right)^3 u^{3
  \lambda_k} \left(h_1'\left(\frac{1}{u}\right)\right)^3 \left|
  \frac{B_{n-1,u}-nu}{nu} \right|^{3\lambda_k} d \Prob\\
& \leq \left(\frac{1}{\lambda_k}\right)^3 n^{-3\lambda_k} (1-\log(u))^3\E\left[|-u+B_{n-1,u}-(n-1)u|^3 \right]^{\lambda_k}\\
& \leq C_k \hspace{0.2 cm} \frac{(1-\log(u))^3}{n^{3\lambda_k/2}}
\end{align*}
for some constant $C_k >0$. Combining these estimates, we obtain
\begin{align*}
\left\|\left( \frac{I_n}{n}\right)^{\lambda_k}
  \log\left(\frac{I_n}{nU} \right)\right\|^3_{3} &\leq \int_0^1\left(
                                                   u^{3 \lambda_k}
                                                   \exp \left(- C_0 un\right)+ C_k \hspace{0.2 cm} \frac{(1-\log(u))^3}{n^{3\lambda_k/2}}\right)du \\
&= \bo\left( \frac{1}{n^{3\lambda_k/2}}\right)
\end{align*}
as $n \to \infty$. This implies the assertion.
\end{proof}

\begin{lem} \label{bn} As $n\to\infty$, we have
\begin{align*}
\|b_n \|_{3} \longrightarrow 0.
\end{align*}
\end{lem}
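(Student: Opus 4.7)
The plan is to reduce $\|b_n\|_3\to 0$ to two $L_3$-estimates, one on $\sigma_n F_n^{(1)}$ and one on $\sigma_n F_n^{(2)}$, both driven by Lemma~\ref{GamU}. Since $\Cov(Z_n)\to M_m$ by Section~\ref{sec:31} and $M_m$ is invertible, continuity of matrix inversion and of the matrix square root gives $\Sigma_n\to\mathrm{Id}_{m-1}$ in operator norm; hence $\|b_n\|_3\le\|\Sigma_n\|_{\text{op}}\,\|\sigma_n(F_n^{(1)}+F_n^{(2)})\|_3$ and it suffices to show that $\|\sigma_n F_n^{(j)}\|_3\to 0$ for $j=1,2$.

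For a coordinate of $F_n^{(1)}$ attached to some $k\in\{1,\dots,r\}$, the starting point is Stirling's formula in the form
\[\frac{\Gamma(\ell+1+\omega^k)}{\Gamma(\ell+1)\Gamma(1+\omega^k)}=\frac{\ell^{\omega^k}}{\Gamma(1+\omega^k)}+O\bigl(\ell^{\lambda_k-1}\bigr)\qquad(\ell\ge 1).\]
Plugging $\ell=I_n,\,J_n,\,n$ into $G_{k,n}(I_n)$ and subtracting $n^{\omega^k}g_k(U)=\Gamma(1+\omega^k)^{-1}\bigl((nU)^{\omega^k}+\omega^k(n(1-U))^{\omega^k}-n^{\omega^k}\bigr)$, the $-n^{\omega^k}$ contributions cancel exactly, leaving a linear combination of the differences $I_n^{\omega^k}-(nU)^{\omega^k}$ and $J_n^{\omega^k}-(n(1-U))^{\omega^k}$ together with a Stirling remainder of pathwise size $O(I_n^{\lambda_k-1}\ind_{\{I_n\ge 1\}}+\ind_{\{I_n=0\}})+O(J_n^{\lambda_k-1}\ind_{\{J_n\ge 1\}}+\ind_{\{J_n=0\}})+O(n^{\lambda_k-1})$. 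Lemma~\ref{GamU} together with its $U\leftrightarrow 1-U$ variant gives
\[\bigl\|I_n^{\omega^k}-(nU)^{\omega^k}\bigr\|_3=n^{\lambda_k}\bigl\|(I_n/n)^{\omega^k}-U^{\omega^k}\bigr\|_3=O(n^{\lambda_k/2}),\]
so after multiplication by the $1/\sqrt n$ factor from $\sigma_n$ this main part is $O(n^{(\lambda_k-1)/2})\to 0$ because $\lambda_k<1$. A direct moment calculation against the uniform distribution of $I_n$ on $\{0,\dots,n-1\}$ bounds the Stirling remainder in $L_3$ by $O(n^{\lambda_k-1}\vee n^{-1/3})$, contributing $o(1)$ after the $1/\sqrt n$ rescaling as well.

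For $F_n^{(2)}$ the same differences appear, now weighted by $\Xi_k^{(0)}$ or $\omega^k\Xi_k^{(1)}$. The binary-search-tree embedding of Section~\ref{sec:33} makes $\Xi_k^{(0)}$ a measurable function of the keys falling into the left subtree, hence independent of the pair $(U,I_n)$, with the symmetric statement for $\Xi_k^{(1)}$ and $(U,J_n)$. Independence gives
\[\bigl\|(I_n^{\omega^k}-(nU)^{\omega^k})\Xi_k^{(0)}\bigr\|_3=\bigl\|I_n^{\omega^k}-(nU)^{\omega^k}\bigr\|_3\cdot\|\Xi_k^{(0)}\|_3,\]
with $\|\Xi_k^{(0)}\|_3<\infty$ by the $L_p$-convergence in (\ref{mg_conv}). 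Combined with Lemma~\ref{GamU} and the $1/\sqrt n$ factor this yields $\|\sigma_n F_n^{(2)}\|_3=O(n^{(\lambda_k-1)/2})\to 0$.

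The coordinates of $F_n^{(j)}$ with $k>r$ are identically zero, and the additional $1/\sqrt{\log n}$ factor carried by $\sigma_n$ on the $m/6$-block when $6\mid m$ only sharpens the bound. The only delicate technical point I anticipate is the bookkeeping of the Stirling expansion uniformly down to $\ell\in\{0,1\}$: the events $\{I_n=0\}$ and $\{J_n=0\}$ each have probability $1/n$, on which the remainder is replaced by a bounded constant, contributing only $O(n^{-1/3})$ in $L_3$ and hence $O(n^{-5/6})=o(1)$ after $\sigma_n$-scaling; everywhere else Lemma~\ref{GamU} and the elementary Stirling moment bound suffice.
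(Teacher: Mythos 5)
Your overall strategy matches the paper's: reduce to $\|\sigma_n F_n^{(1)}\|_3\to 0$ and $\|\sigma_n F_n^{(2)}\|_3\to 0$, drive the main estimate with Lemma~\ref{GamU}, use the independence of $\Xi_k^{(0)}$ from $(I_n,U)$ for $F_n^{(2)}$, and use a Stirling expansion of the Gamma quotients (the paper absorbs this into a $\mathrm{o}(1)$ term) for the $k\le r$ coordinates of $F_n^{(1)}$. However, there is a genuine gap near the end. You assert that the coordinates of $F_n^{(j)}$ indexed by $k>r$ are identically zero. That is true for $F_n^{(2)}$, but it is false for $F_n^{(1)}$: looking at its definition, the second (subtracted) vector has zeros beyond the $r$-th pair, but the first vector has the entries $\Re(G_{k,n}(I_n))$, $\Im(G_{k,n}(I_n))$ for all $k$ up to $m/2-1$. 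Hence $F_n^{(1)}$ has the nonzero coordinates $G_{k,n}(I_n)$ for $r+1\le k\le m/2-1$ (only the $k=m/2$ slot is genuinely zero), and your argument never bounds them.

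These missing terms are harmless but must be handled. One checks $\|G_{k,n}(I_n)\|_3=\mathrm{O}(n^{\lambda_k})$ directly from the Gamma-quotient asymptotics and the uniform distribution of $I_n$, so for $\lambda_k<\tfrac12$ the $n^{-1/2}$ factor from $\sigma_n$ gives $\mathrm{O}(n^{\lambda_k-1/2})\to 0$; and for $k=r+1$ in the case $6\mid m$ (where $\lambda_{m/6}=\tfrac12$) the additional $(\log n)^{-1/2}$ carried by $\sigma_n$ on that block is what makes the contribution $\mathrm{O}((\log n)^{-1/2})\to 0$ — contrary to your remark, this factor is not merely a harmless sharpening but is essential on exactly this coordinate. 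The paper handles this by the explicit bound
\begin{align*}
\left\|\sigma_n F_n^{(1)}\right\|_3 \le \frac{2}{\sqrt n}\Bigl(\sum_{k=1}^r \|G_{k,n}(I_n)-n^{\omega^k}g_k(U)\|_3 + \frac{1}{\sqrt{\log n}}\|G_{r+1,n}(I_n)\|_3 + \sum_{k=r+2}^{m/2-1}\|G_{k,n}(I_n)\|_3\Bigr),
\end{align*}
and you need an analogous treatment; once you add it, the remainder of your argument goes through.
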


\begin{proof}
By the triangle inequality,
\begin{align*}
\|b_n \|_{3} & \leq \|\Sigma_n\|_{\text{op}} \sum_{j=1}^2
                 \left\|\sigma_n F_n^{(j)} \right\|_{3}.
\end{align*}
We have $(I_n, U, \Xi_k^{(0)}) \stackrel{d}{=} (J_n, 1-U, \Xi_k^{(1)})$ with $\Xi_k^{(0)}$ independent of $(I_n,U)$. The triangle inequality implies
\begin{align*}
\left\|\sigma_nF_n^{(2)} \right\|_{3} & \leq \frac{4}{\sqrt{n}} \sum_{k=1}^r  n^{\lambda_k} \left\|\Xi_k^{(0)}\right\|_{3} \left\|\left( \frac{I_n}{n}\right)^{\omega^k} - U^{\omega^k} \right\|_{3} \\
&= \frac{4}{\sqrt{n}} \sum_{k=1}^r \bo \left(n^{\lambda_k/2} \right) = \so(1)
\end{align*}
by Lemma (\ref{GamU}). Also, for $n \to \infty$,
\begin{align*}
\left\|\sigma_nF_n^{(1)} \right\|_{3} & \leq \frac{2}{\sqrt{n}}
                                          \left(\sum_{k=1}^r \left\|
                                          G_{k,n}(I_n) -
                                          n^{\omega^k}g_k(U)\right\|_{3}
                                          \right. \\
&+ \left.
                                          \frac{1}{\sqrt{\log(n)}}\left\| G_{r+1,n}(I_n) \right\|_{3} +\sum_{k=r+2}^{m/2-1} \left\| G_{k,n}(I_n) \right\|_{3} \right) \\
& \leq \frac{2}{\sqrt{n}} \left(\sum_{k=1}^r \frac{2}{\Gamma(1+\omega^k)}n^{\lambda_k}\left\|\left( \frac{I_n}{n}\right)^{\omega^k} - U^{\omega^k} \right\|_{3}  \right. \\
& \left. + \frac{1}{\sqrt{\log(n)}}\left\| G_{r+1,n}(I_n)
  \right\|_{3} + \sum_{k=r+2}^{m/2-1} \left\| G_{k,n}(I_n)
  \right\|_{3} \right) + \so(1)\\
&= \so(1)
\end{align*}
as before. Now, the sequence $(\|\Sigma_n\|_{\text{op}})_{n\ge 0}$ is
convergent and thus bounded, which implies the claim.
\end{proof}

Finally, we use recursion (\ref{RekN}) for $N_n$ to show that the sequence $(\|N_n\|_{3} )_{n\ge 0}$  is bounded.

\begin{lem} \label{bounded} As $n\to\infty$, we have
\begin{align*}
\|N_n\|_{3} = \bo(1).
\end{align*}
\end{lem}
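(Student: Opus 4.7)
Strong induction on $n$, based on the recursion (\ref{RekN}). For each fixed $n$, $\|N_n\|_3<\infty$ is immediate from $\|R_n\|_\infty\le n+1$, so the task is to upgrade this to a uniform bound. A naive application of Minkowski to (\ref{RekN}) does \emph{not} close the induction: by Lemma \ref{Acoeff} it yields an effective contraction factor $(\E[U^{3/2}])^{1/3}+(\E[(1-U)^{3/2}])^{1/3}=2(2/5)^{1/3}\approx 1.47>1$, which is strictly larger than $1$.

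The necessary improvement comes from conditioning on $(I_n,U)$: given this pair, the two subtree contributions $N_{I_n}^{(0)}$ and $N_{J_n}^{(1)}$ are \emph{independent} random vectors which, for $I_n,J_n\ge n_0$, are centered with common covariance $M_m$ by the definition (\ref{def_Si}) of $\Sigma_n$. A Rosenthal-type inequality for sums of two independent centered random vectors then gives, with absolute constants $\alpha,\beta>0$,
\[
\E\!\left[\bigl\|A_n^{(0)}N_{I_n}^{(0)}+A_n^{(1)}N_{J_n}^{(1)}\bigr\|^3\,\Big|\,(I_n,U)\right]\le \alpha\!\sum_{i=0,1}\|A_n^{(i)}\|_{\text{op}}^3\,\|N_{K_i}\|_3^3+\beta\!\left(\sum_{i=0,1}\|A_n^{(i)}\|_{\text{op}}^2\,\|N_{K_i}\|_2^2\right)^{\!3/2},
\]
with $K_0:=I_n$, $K_1:=J_n$. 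The second (``Gaussian'') summand is uniformly bounded in $n$ thanks to $\sup_k\|N_k\|_2<\infty$ (immediate from $\mathrm{Cov}(N_n)=M_m$ for $n\ge n_0$ as established in Section \ref{sec:31}) and $\|A_n^{(i)}\|_{\text{op}}=\mathrm{O}(1)$; the inhomogeneity $\|b_n\|_3\to 0$ from Lemma \ref{bn} contributes only lower-order terms. Taking expectations in the first summand and applying the induction hypothesis $\|N_k\|_3\le C$ for $k<n$ reduces the induction step to the contraction estimate $\alpha\bigl(\E[\|A_n^{(0)}\|_{\text{op}}^3]+\E[\|A_n^{(1)}\|_{\text{op}}^3]\bigr)\to \tfrac{4\alpha}{5}$, by Lemma \ref{Acoeff}.

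Provided $4\alpha/5<1$, choosing $C$ sufficiently large (also to absorb the finitely many initial indices) closes the induction and delivers $\sup_n\|N_n\|_3<\infty$. The main obstacle is precisely this compatibility between the Rosenthal constant $\alpha$ (for two independent centered summands at exponent $3$) and the contraction factor $4/5$: without the independence-based improvement over Minkowski, the recursion alone is insufficient. This is the analogue, in the present vector-valued setting, of the $L_3$-boundedness step carried out for a related univariate recursion in \cite{ne14}, from which the relevant Rosenthal-type inequality and its constants are imported.
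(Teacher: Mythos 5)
Your proposal identifies the right mechanism and is in essence the same route the paper takes: strong induction via (\ref{RekN}), observing that naive Minkowski gives the non-contracting factor $2(2/5)^{1/3}$, exploiting independence of $N^{(0)}_{I_n}$ and $N^{(1)}_{J_n}$ conditionally on $I_n$, and using the uniform $L_2$-bound coming from $\mathrm{Cov}(N_n)=M_m$ ($n\ge n_0$) to render the cross-terms lower-order, leaving $\E[U^{3/2}+(1-U)^{3/2}]=4/5$ as the effective contraction factor.

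The one place where the argument is not closed is the Rosenthal constant. You set up the inequality
\[
\E\!\left[\bigl\|A_n^{(0)}N_{I_n}^{(0)}+A_n^{(1)}N_{J_n}^{(1)}\bigr\|^3\,\Big|\,(I_n,U)\right]\le \alpha\!\sum_{i}\|A_n^{(i)}\|_{\text{op}}^3\|N_{K_i}\|_3^3+\beta\!\left(\sum_{i}\|A_n^{(i)}\|_{\text{op}}^2\|N_{K_i}\|_2^2\right)^{3/2}
\]
and then explicitly require $4\alpha/5<1$, but you never pin down $\alpha$; generic statements of Rosenthal's inequality for $p=3$ have constants well above $5/4$, and \cite{ne14} does not actually prove or invoke a Rosenthal-type inequality with named constants that you could import (it carries out a direct expansion in the scalar case). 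The paper avoids the issue entirely: from the triangle inequality $\|N_n\|\le \mathcal{Y}^{(0)}+\mathcal{Y}^{(1)}+\|b_n\|$ with $\mathcal{Y}^{(j)}=\|A_n^{(j)}\|_{\text{op}}\|N_n^{(j)}\|$, one expands the scalar cube $(\mathcal{Y}^{(0)}+\mathcal{Y}^{(1)}+\|b_n\|)^3$ exactly. The pure cube terms then appear with coefficient $1$ (so effectively $\alpha=1$), and each mixed term, after conditioning on $I_n$, factorizes into products such as $\E[\|N_k\|^2]\,\E[\|N_{n-1-k}\|]$ by independence, which is $\mathrm{O}(1)$ directly from the uniform $L_2$ (hence $L_1$) bounds — no centeredness or Gaussian-type absorption is needed. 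Making this explicit closes the gap; as written, the step "provided $4\alpha/5<1$" is an assumption rather than a verified fact.
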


\begin{proof}
 Recall that the composition vector $R_n$ takes only finitely many values, the random variables $\Xi_k$ have finite absolute moments of arbitrary order, see (\ref{mg_conv}), and $\|\Sigma_n\|_{\text{op}} \to 1$. Hence, we have $\|N_n\|_{3} < \infty$ for all $n \geq 0$.

Recursion (\ref{RekN}) implies that
\begin{align*}
\|N_n\| & \leq \mathcal{Y}^{(0)} + \mathcal{Y}^{(1)} + \|b_n\|,
\end{align*}
where $\mathcal{Y}^{(0)} := \left\|A_n^{(0)}\right\|_{\text{op}} \left\|N_{I_n}^{(0)}\right\|$ and $\mathcal{Y}^{(1)} := \left\|A_n^{(1)}\right\|_{\text{op}} \left\|N_{J_n}^{(1)}\right\|$. For all $n \geq 0$,
\begin{align}
\E\left[ \|N_n\|^3\right] &\leq \E\left[\left( \mathcal{Y}^{(0)} \right)^3 \right] + \E\left[\left( \mathcal{Y}^{(1)} \right)^3 \right] + \E\left[\|b_n\|^3\right] + 3 \E\left[\left( \mathcal{Y}^{(0)} \right)^2 \mathcal{Y}^{(1)} \right] \nonumber\\
& + 3  \E\left[\left( \mathcal{Y}^{(1)} \right)^2 \mathcal{Y}^{(0)} \right] + 3 \E\left[\left( \mathcal{Y}^{(0)} \right)^2 \|b_n\|\right] +  3 \E\left[ \mathcal{Y}^{(0)}  \|b_n\|^2\right] \nonumber\\
&+ 3 \E\left[\left( \mathcal{Y}^{(1)} \right)^2 \|b_n\|\right] +  3 \E\left[ \mathcal{Y}^{(1)}  \|b_n\|^2\right] + 6 \E\left[ \mathcal{Y}^{(0)} \mathcal{Y}^{(1)} \|b_n\|\right] .\label{langesumme}
\end{align}
Set
\begin{align*}
\beta_n &:= 1 \vee \max_{0 \leq k \leq n} \E\left[ \|N_k\|^3\right].
\end{align*}
By Lemma \ref{bn}, $\E\left[\|b_n\|^3\right] \to 0$ as $n \to \infty$. Also,
\begin{align*}
\E\left[\left( \mathcal{Y}^{(j)} \right)^3 \right] &= \E \left[ \left\|A_n^{(j)}\right\|_{\text{op}}^3 \sum_{k=0}^{n-1} \ind(I_n = k) \E\left[\|N_k\|^3 \right] \right]  \leq \E \left[ \left\|A_n^{(j)}\right\|_{\text{op}}^3 \right] \beta_{n-1}
\end{align*}
for $j=0,1$.

To bound the summand $\E\left[\left( \mathcal{Y}^{(0)} \right)^2 \mathcal{Y}^{(1)} \right]$, note that $\left\|A_n^{(0)}\right\|_{\text{op}}$ and $\left\|A_n^{(1)}\right\|_{\text{op}}$ are uniformly bounded in $n$. This implies that after conditioning on $I_n$, there is a constant $D>0$ such that
\begin{align*}
\E\left[\left( \mathcal{Y}^{(0)} \right)^2 \mathcal{Y}^{(1)} \right] & \leq D \E\left[ \sum_{k=0}^{n-1} \ind(I_n=k) \E\left[\|N_k \|^2\right] \E\left[\|N_{n-1-k} \|\right] \right]\\
& \leq D \left( \max_{0 \leq k \leq n-1} \|N_k\|_{2}^2\right) \left(  \max_{0 \leq k \leq n-1} \|N_k\|_{1}\right).
\end{align*}
Now, by construction, $\Cov(N_n)=M_m$ for all $n \geq n_0$, so
$\max_{0 \leq k \leq n-1} \|N_k\|_2^2 < K$ for some $K>0$ and hence $\E\left[\left( \mathcal{Y}^{(0)} \right)^2 \mathcal{Y}^{(1)} \right] = \bo(1)$. The same applies to $\E\left[\left( \mathcal{Y}^{(1)} \right)^2 \mathcal{Y}^{(0)} \right] $.

All other summands in (\ref{langesumme}) can be bounded using H\"older's inequality. Combining all these bounds leads to the estimate
\begin{align*}
\E\left[ \|N_n\|^3\right] &\leq \left(\E\left[ \left\|A_n^{(0)}\right\|_{\text{op}}^3 + \left\|A_n^{(1)}\right\|_{\text{op}}^3\right] + \so(1) \right) \beta_{n-1} + \bo(1).
\end{align*}
The asymptotics in Lemma \ref{Acoeff} further imply
\begin{align*}
\E\left[ \|N_n\|^3\right] &\leq \left(\E\left[ U^{3/2} + (1-U)^{3/2}\right] + \so(1) \right) \beta_{n-1} + \bo(1) = \left(\frac{4}{5} + \so(1)\right)\beta_{n-1} + \bo(1).
\end{align*}
Since $\beta_n \geq 1$, there exist $J \in \N$ and a constant $0 < E < \infty$ such that for all $n \geq J$, $\E\left[ \|N_n\|^3\right] \leq (9/10) \beta_{n-1} + E$. Induction on $n$ gives that for all $n \geq 0$, $\E\left[ \|N_n\|^3\right] \leq  \max\{\beta_{J},10 E\}$.
\end{proof}

\subsection{Proof of Proposition \ref{prop1}}\label{sec:34}
\begin{proof}[Proof of Proposition \ref{prop1}]
Proposition \ref{prop1} claims the convergence $Z_n
\stackrel{d}{\longrightarrow} \mathcal{N}$, as $n \to \infty$, where
$\mathcal{N} \sim \mathcal{N}(0,M_m)$. In order to establish this convergence, the key point is to show that
\begin{align*}
\zeta_3(N_n, \mathcal{N}) \longrightarrow 0 \hspace{0.5 cm} \text{as} \hspace{0.3 cm} n \to \infty.
\end{align*}
This is sufficient, as the difference $Z_n - N_n$ tends to
$0$ in probability and convergence in the Zolotarev metric
implies weak convergence of probability measures on $\mathbb{R}^{m-1}$. 

Recall that $N_n$ satisfies (\ref{RekN}) and that $\mathcal{N}(0,M_m)$ is
a solution to the distributional recursion
\begin{align*}
\mathcal{N} \stackrel{d}{=} \sqrt{U}  \mathcal{N}^{(0)} +
  \sqrt{1-U}\mathcal{D} \mathcal{N}^{(1)},
\end{align*}
where $\mathcal{N}^{(0)},\mathcal{N}^{(1)}$ and $U$ are independent,
$U$ is uniform on $[0,1]$ and $\mathcal{N}^{(0)}$ and $\mathcal{N}^{(1)}$ have the same distribution as  $\mathcal{N}$.

First, we use recursion (\ref{RekN}) for $N_n$ to define hybrid random
variables that link $N_n$ to $\mathcal{N}(0,M_m)$ as follows: Let
$\mathcal{N}^{(0)}$ and $\mathcal{N}^{(1)}$ be defined on the same
probability space as $(U_n)_{n \ge 1}$, independent with
distribution $\mathcal{N}(0,M_m)$ and also independent of
$(U_n)_{n \geq 1}$. We eliminate the error term in the given recursion
and set
\begin{align*}
Q_n := A_n^{(0)}\left(\ind(I_n < n_0)N_{I_n}^{(0)}+\ind(I_n \geq n_0) \mathcal{N}^{(0)}\right) + A_n^{(1)} \left(\ind(J_n < n_0)N_{J_n}^{(1)}+\ind(J_n \geq n_0) \mathcal{N}^{(1)}\right)
\end{align*}
for $n \geq 1$ with $Q_0:=N_0$.
$Q_n$ does not necessarily have covariance matrix $M_m$. However, note that $I_n/n$ converges to the uniform random variable $U$ almost surely. Together with Lemma \ref{Acoeff}, we obtain
\begin{align*}
\Cov(Q_n)  \to M_m.
\end{align*}
In order to ensure finiteness of the Zolotarev metric, the covariance
matrix of $Q_n$ has to be adjusted. Due to the convergence of the covariance matrix, $\Cov(Q_n) $ has full rank for all
$n \ge n_1$.
This implies that we can find
a deterministic sequence of matrices $(B_n)_{n\ge 0}$ with $\Cov( B_n
Q_n) = M_m$ for all $n \geq n_1$ and $B_n \to \mathrm{Id}_{m-1}$
componentwise and in operator norm as $n \to \infty$. 
We write $B_n
= \mathrm{Id}_{m-1} + K_n$ with $(K_n)_{n\ge 0}$ tending to the all zero matrix componentwise. 

Without loss of generality, we assume that $n_1 \geq n_0$ in the following. Hence, with $\mathcal{N}$ as before and $n \geq n_1$, each pair of $N_n$, $(\text{Id}_{m-1}+K_n)Q_n$ and $\mathcal{N}$ is $\zeta_3$-compatible and the triangle inequality implies
\begin{align} \label{split}
\zeta_3(N_n, \mathcal{N}) &\leq \zeta_3(N_n, (\text{Id}_{m-1}+K_n)Q_n) + \zeta_3((\text{Id}_{m-1}+K_n)Q_n,\mathcal{N}),
\end{align}
which is finite for all $n \geq n_1$.

First we show that $\zeta_3((\text{Id}_{m-1}+K_n)Q_n,\mathcal{N}) = \so(1)$ by use of an upper bound of $\zeta_3$ by the minimal $L_3$-metric  $\ell_3$.
The  minimal $L_3$-metric $\ell_3$ is given by
\begin{align}\label{ell_p}
\ell_3(X,Y):= \ell_3({\cal L}(X),{\cal L}(Y)):= \inf\{\|X'-Y'\|_3: {\cal L}(X)={\cal L}(X'), {\cal L}(Y)={\cal L}(Y')\},
\end{align}
for all  random vectors $X$, $Y$ with $\|X\|_3, \|Y\|_3<\infty$.
 For a $\zeta_3$-compatible pair $(X,Y)$, we have the inequality, see \cite[Lemma 5.7]{DrJaNe08},
\begin{align*}
\zeta_3(X,Y) \leq \left( \|X\|_{3}^2 + \|Y\|_{3}^2\right) \ell_3(X,Y).
\end{align*}
As $\sup_{n \geq 0} \|Q_n\|_{3} < \infty$ by Lemma \ref{Acoeff} and the properties of the Gaussian distribution, also $\|(\text{Id}_{m-1}+K_n)Q_n) \|_{3}$ is uniformly bounded in $n$. So there exists a finite constant $C>0$ with
\begin{align*}
\zeta_3((\text{Id}_{m-1}+K_n)Q_n,\mathcal{N}) \leq C \ell_3((\text{Id}_{m-1}+K_n)Q_n,\mathcal{N})
\end{align*}
for all $n \geq n_1$. In order to upper bound the latter $\ell_3$-distance,
note that the random vectors $\mathcal{N}$ and
$\sqrt{U}\mathcal{N}^{(0)} + \sqrt{1-U}\mathcal{D}\mathcal{N}^{(1)}$ are
identically distributed. Thus for $n \geq n_1$,
\begin{align*}
& \zeta_3((\text{Id}_{m-1}+K_n)Q_n,\mathcal{N}) \leq C \ell_3((\text{Id}_{m-1}+K_n)Q_n,\mathcal{N})
  \\
 &\leq  C \left\|\left( (\text{Id}_{m-1} + K_n)A_n^{(0)}\ind(I_n \geq n_0) - \sqrt{U}
  \mathrm{Id}_{m-1}\right) \mathcal{N}^{(0)} \right. \\
  & \left. \quad + \left( (\text{Id}_{m-1} + K_n)A_n^{(1)}\ind(J_n \geq n_0) - \sqrt{1-U}\mathcal{D}
  \right) \mathcal{N}^{(1)} \right\|_{3} \\
  & \quad + C \left\|(\text{Id}_{m-1} + K_n)A_n^{(0)}\ind(I_n < n_0)N_{I_n}^{(0)} + (\text{Id}_{m-1} + K_n)A_n^{(1)}\ind(J_n < n_0)N_{J_n}^{(1)} \right\|_3 \\
 &\leq C \left( \left\| (\text{Id}_{m-1} + K_n)A_n^{(0)} - \sqrt{U}
  \text{Id}_{m-1}\right\|_{3} \left\|\mathcal{N}^{(0)} \right\|_{3} \right. \\
  & \quad +\left. \left\| (\text{Id}_{m-1} + K_n)A_n^{(1)} - \sqrt{1-U}\mathcal{D}
  \right\|_{3}\left\| \mathcal{N}^{(1)} \right\|_{3}
  \right) \\ 
  & \quad +  C \left\|(\text{Id}_{m-1} + K_n)A_n^{(0)}\ind(I_n < n_0)N_{I_n}^{(0)} + (\text{Id}_{m-1} + K_n)A_n^{(1)}\ind(J_n < n_0)N_{J_n}^{(1)} \right\|_3 \stackrel{n \to \infty}{\longrightarrow} 0.
\end{align*}

To bound the first summand in (\ref{split}), we split $N_n$ into two parts and
consider the vector
\begin{align*}
\Phi_n:= A_n^{(0)} N_{I_n}^{(0)} + A_n^{(1)} N_{J_n}^{(1)}, \quad n \geq 1,
\end{align*}
with $\Phi_0:= N_0$ such that $N_n = \Phi_n + b_n$. An application of Lemma \ref{zolosum} to
the sums $N_n = \Phi_n + b_n$ and (Id$_{m-1}+K_n$)$Q_n = Q_n + K_nQ_n$
gives for $n \geq n_1$ that
\begin{align*}
\zeta_3(N_n, (\text{Id}_{m-1}+K_n)Q_n) & \leq \zeta_3(\Phi_n, Q_n) +
                                     \|\Phi_n\|_{3}^2\|b_n\|_{3} +
                                     \frac{1}{2}\|\Phi_n\|_{3}\|b_n\|_{3}^2
+ \frac{1}{2}\|b_n\|_{3}^3\\
&~\qquad + \left(\|K_n\|_{\text{op}} + \frac{1}{2}
  \|K_n\|_{\text{op}}^2 + \frac{1}{2}
  \|K_n\|_{\text{op}}^3\right) \|Q_n\|_{3}^3.
\end{align*}
By construction, $\|K_n\|_{\text{op}} \to 0$ and by Lemma \ref{bn},
$\|b_n\|_{3} \to 0$. Also, by Lemma \ref{bounded}, $\sup_{n \geq
  0}\|\Phi_n\|_{3} < \infty$ and $\sup_{n \geq 0}\|Q_n\|_{3} <
\infty$, this yields for $n \geq n_1$ that
\begin{align*}
\zeta_3(N_n, (\text{Id}_{m-1}+K_n)Q_n) \leq \zeta_3(\Phi_n, Q_n) + \so(1).
\end{align*}
The previous estimates and (\ref{split}) imply that for $n \geq n_1$,
\begin{align}
 \zeta_3(N_n, \mathcal{N}) & \leq \zeta_3 \left(\Phi_n, Q_n\right) + \so(1).
\end{align}

Let $\Delta(n):= \zeta_3(N_n, \mathcal{N})$, which is finite for $n \geq n_1$. Note that $\zeta_3\left(\Phi_n, Q_n\right)$ is finite for $n \geq 0$. In the expectations defining the Zolotarev distance, we condition on the
value of $I_n$. With $(N_0^{[0]}, \ldots,
N_{n-1}^{[0]})$,$(N_0^{[1]}, \ldots, N_{n-1}^{[1]})$ i.i.d.~with
distribution $\mathcal{L}(N_0, \ldots, N_{n-1})$ we make use of
independence and the fact that $\zeta_3$ is $(3,+)$-ideal and satisfies (\ref{op_raus}) to get, again for $n \geq n_1$,
\begin{align*}
\zeta_3 \left(\Phi_n, Q_n\right) & \leq  \frac{1}{n} \sum_{k=0}^{n_0-1}\zeta_3\left(
  \Sigma_n\sigma_n
  \sigma_{n-1-k}\mathcal{D}\Sigma_{n-1-k}^{-1}N_{n-1-k}^{[1]},\Sigma_n\sigma_n
  \sigma_{n-1-k}\mathcal{D}\Sigma_{n-1-k}^{-1}\mathcal{N}^{(1)}  \right) \\
  & +  \frac{1}{n} \sum_{k=n-n_0}^{n-1}\zeta_3\left(
  \Sigma_n\sigma_n
  \sigma_{k}\Sigma_{k}^{-1}N_{k}^{[0]},\Sigma_n\sigma_n
  \sigma_{k}\Sigma_{k}^{-1}\mathcal{N}^{(0)}  \right) \\
  & +  \frac{1}{n} \sum_{k=n_0}^{n-n_0-1} \zeta_3\left(\Sigma_n\sigma_n\sigma_{k}^{-1}\Sigma_k^{-1}N_k^{[0]} +
  \Sigma_n\sigma_n
  \sigma_{n-1-k}\mathcal{D}\Sigma_{n-1-k}^{-1}N_{n-1-k}^{[1]},
  \right.\\
& \left. \quad\quad\quad\quad\quad\; \Sigma_n\sigma_n\sigma_k^{-1}\Sigma_k^{-1}\mathcal{N}^{(0)} +
 \Sigma_n \sigma_n\sigma_{n-1-k}^{-1} \mathcal{D}\Sigma_{n-1-k}^{-1}\mathcal{N}^{(1)}
  \right)\\
  & \leq \frac{2}{n} \sum_{k=n-n_0}^{n-1} \|\sigma_n \sigma_k^{-1}\|_{\text{op}}^3 \|\Sigma_n\|_{\text{op}}^3
  \|\Sigma_k^{-1}\|_{\text{op}}^3 \zeta_3\left(N_k^{[0]},
  \mathcal{N}^{(0)} \right)\\
  & + \frac{2}{n} \sum_{k=n_0}^{n-n_0} \|\sigma_n \sigma_k^{-1}\|_{\text{op}}^3 \|\Sigma_n\|_{\text{op}}^3
  \|\Sigma_k^{-1}\|_{\text{op}}^3 \zeta_3\left(N_k^{[0]},
  \mathcal{N}^{(0)} \right)\\
& = \frac{2}{n} \sum_{k=n_0}^{n-1} \|\sigma_n \sigma_k^{-1}\|_{\text{op}}^3 \|\Sigma_n\|_{\text{op}}^3
  \|\Sigma_k^{-1}\|_{\text{op}}^3 \zeta_3\left(N_k^{[0]},
  \mathcal{N}^{(0)} \right) .
\end{align*}
Note that
$\|\sigma_n \sigma_{I_n}^{-1}\|_{\text{op}}^3 =
\left(\frac{I_n}{n}\right)^{3/2}$ in both cases $6 \mid m$ and $6 \nmid m$. Hence, for $6 \mid m$ and $n \geq n_1$,
\begin{align*}
\Delta(n) \leq 2 \E\left[\left(\frac{I_n}{n}\right)^{3/2} \|\Sigma_n\|_{\text{op}}^3
  \|\Sigma_{I_n}^{-1}\|_{\text{op}}^3 \Delta(I_n)\ind(I_n \geq n_0)  \right] + \so(1).
\end{align*}
Now a standard argument shows that $\zeta_3(N_n, \mathcal{N})
\to 0$ as $n \to \infty$, see \cite{ne14}, for example.
\end{proof}

\subsection*{Acknowledgement}  We thank Henning Sulzbach  for helpful comments.


\begin{thebibliography}{20}

\bibitem{BiFi12}
Bindjeme, P. and Fill, J.~A. (2012)
Exact $L^2$-Distance from the Limit for QuickSort Key Comparisons (Extended abstract). {\em DMTCS proc.} {\bf
AQ}, 23rd International Meeting on Probabilistic, Combinatorial, and Asymptotic Methods in the Analysis of Algorithms (AofA'12), 339--348.

\bibitem{chmapo15}
Chauvin, B., Mailler, C. and Pouyanne, N. (2015)
Smoothing equations for large P\'{o}lya urns.
{\em J. Theor. Probab.} {\bf 28}, 923--957.




\bibitem{chfuhw07}
Chern, H.-H., Fuchs, M. and Hwang, H.-K. (2007)
Phase changes in random point quadtrees.
{\em ACM Trans. Algorithms} {\bf 3}, Art.~12, 51 pp.

\bibitem{chhw01}
Chern, H.-H. and Hwang, H.-K. (2001)
Phase changes in random $m$-ary search trees and generalized quicksort.
{\em Random Structures Algorithms} {\bf 19}, 316--358.

\bibitem{DrJaNe08}
Drmota, M., Janson, S. and Neininger, R. (2008)
 A functional limit theorem for the profile of search trees.
{\em Ann. Appl. Probab.} {\bf 18}, 288--333.

\bibitem{evgrwa12}
Evans, S.N., Gr\"ubel, R. and Wakolbinger, A. (2012) Trickle-down processes and their boundaries. {\em Electron. J. Probab.} {\bf 17}, 1-58.


\bibitem{Free65}
Freedman, D. A. (1965)
Bernard Friedman's Urn. {\em Ann. Math. Statist.} {\bf 36}, no. 3, 956--970.

\bibitem{gr14}
Gr\"ubel, R. (2014)
Search trees: Metric aspects and strong limit theorems.
{\em Ann. Appl. Probab.} {\bf 24}, 1269--1297.

\bibitem{Ja83}
Janson, S. (1983) Limit theorems for certain branching random walks on compact groups and homogeneous spaces. {\em Ann. Probab.} {\bf 11}, 909--930.

\bibitem{Ja04}
Janson, S. (2004) Functional limit theorem for multitype branching processes and generalized P\'olya urns.
{\em Stochastic Process. Appl.} {\bf 110}, 177--245.


\bibitem{Ja06}
Janson, S. (2006) Congruence properties of depths in some random
trees. {\em Alea} {\bf 1}, 347--366.


\bibitem{knne14}
Knape, M. and Neininger, R. (2014)
P\'{o}lya Urns Via the Contraction Method.
{\em Combin. Probab. Comput.} {\bf 23}, 1148--1186.


\bibitem{Ma92}
Mahmoud, H.~M.  (1992) \emph{Evolution of Random Search
Trees}, John Wiley \&\ Sons, New York.


\bibitem{mai14}
Mailler, C. (2014)
Describing the asymptotic behaviour of multicolour P\'{o}lya urns via smoothing systems analysis.
Preprint available via {\tt http://arxiv.org/abs/1407.2879}.

\bibitem{mune16}
M\"uller, N.~S. and Neininger, R. (2016)
The CLT Analogue for Cyclic Urns.
{\em Analytic Algorithmics and Combinatorics (ANALCO)}, 121--127.

\bibitem{ne14}
Neininger, R. (2015)
Refined Quicksort asymptotics.
{\em Random Structures Algorithms} {\bf 46}, 346--361.


\bibitem{neru04}
Neininger, R. and R\"uschendorf, L. (2004) A general limit theorem for recursive algorithms and combinatorial structures. {\em Ann. Appl. Probab.} {\bf 14}, 378-418.


\bibitem{Pou05}
Pouyanne, N. (2005)
Classification of large P\'olya-Eggenberger urns with regard to their asymptotics. {\em 2005 International Conference on Analysis of Algorithms}, 275--285 (electronic),
Discrete Math. Theor. Comput. Sci. Proc., AD, {\em Assoc. Discrete Math. Theor. Comput. Sci., Nancy}.

\bibitem{Pou08}
Pouyanne, N. (2008) An algebraic approach to P\'olya processes. {\em Ann. Inst. Henri Poincar\'e Probab. Stat.} {\bf 44}, 293--323.


\end{thebibliography}
\end{document}